\numberwithin{figure}{section}
\newtheorem{lemma}{Lemma}[section]
\newtheorem{definition}[lemma]{Definition}
\newtheorem{corollary}[lemma]{Corollary}
\newtheorem{proposition}[lemma]{Proposition}
\newtheorem{remark}[lemma]{Remark}
\newtheorem{theorem}[lemma]{Theorem}
\numberwithin{equation}{section}
\def\R{{\mathcal R}}
\renewcommand{\H}{{\mathcal{H}}}
\newcommand{\C}{{\mathbb{C}}}
\newcommand{\pbox}{\hfill$\Box$\\}
\begin{document}
\title[]{Affine density, von Neumann dimension and a problem of Perelomov}
\author{Lu\'{\i}s Daniel Abreu and Michael Speckbacher}
\address{\noindent (L.D.A. and M.S.) Acoustics Research Institute, Austrian Academy
of Sciences, Wohllebengasse 12-14, A-1040, Vienna, Austria.\medskip   \newline
{ (M.S.)} Katholische Universit\"at Eichst\"att-Ingolstadt,
Mathematisch-Geographi- sche Fakult\"at, Ostenstra\ss e 26, 85071
Eichst\"att, Germany.}
\subjclass{}
\keywords{}
\thanks{L. D. A. and M. S. were supported by the Austrian Science Fund (FWF)
via the projects (P 31225-N32), and (J-4254) respectively. The authors would also like to thank the anonymous referee for the valuable comments and suggestions.}

\begin{abstract}
We provide a solution to Perelomov's 1972 problem concerning the existence of a phase transition (known in signal analysis as 'Nyquist rate') determining the basis properties of certain affine coherent states labelled by Fuchsian groups. As suggested by Perelomov, the transition is given according to the hyperbolic volume of the fundamental region. The solution is a more general form (in   phase space) of the $PSL(2,\mathbb{R})$ variant of a 1989 conjecture of Kristian Seip about wavelet frames,  where the same value of `Nyquist rate' is obtained as the trace of a certain localization operator. The proof consists of first connecting the problem  to the theory of von Neumann algebras, by introducing a new class of projective representations of $PSL(2,\mathbb{R})$ acting on non-analytic Bergman-type spaces. We then adapt to this setting a new method for computing von Neumann dimensions, due to Sir Vaughan Jones. Our solution contains necessary conditions in the form of a `Nyquist rate' dividing frames from Riesz sequences of coherent states and sampling from interpolating sequences. They hold for an infinite sequence of spaces of polyanalytic functions containing the eigenspaces of the
Maass operator and their orthogonal sums. Within mild boundaries, we show that our result is best possible, by characterizing our sequence of function spaces as the only invariant spaces under the non-analytic $PSL(2,\mathbb{R})$-representations.
\end{abstract}

\maketitle

\section{Introduction}

When the measure of successive dilations of a given compact set displays
exponential growth, one often faces severe obstructions in problems
involving basis properties or zeros of functions with membership in the
associated function spaces. The exponential growth is characteristic for the
disc and half-plane hyperbolic space models, for the affine group, and for
more general hyperbolic spaces and non-unimodular groups. The obstructions
posed by the exponential growth have been removed for Bergman spaces in \cite%
{PerelomovCompleteness,Seip2}, but the methods fully depend on tools for analytic
functions. 

In the present paper, we  study completeness and basis properties of
non-analytic functions indexed by Fuchsian groups, naturally associated with
Maass forms and wavelet representations (coefficients of the non-unimodular
affine group). Our results  provide a solution of Perelomov's 1972 problem 
\cite{PerelomovQuestion} about basis properties of discrete coherent states for a whole class of  $PSL(2,\mathbb{R})$-invariant
coherent states, and include a proof that the solution is best possible within natural
assumptions, by characterizing all function spaces which are invariant under the action of a new class of projective $PSL(2,\mathbb{R})$-representations, introduced in this paper for non-analytic Bergman spaces. The arguments, based on the interplay of ideas from signal
analysis, number theory, and von Neumann algebras, build on a recent method
of Sir Vaughan Jones \cite{Jones}, to whose memory this paper is dedicated.

\subsection{The Nyquist rate}

There is a general `folk theorem'\ in signal analysis claiming the existence
of a critical density, the so called `Nyquist rate',\ which must be
exceeded for the complete representation of signals. The `stable' version of
the problem, where a lower bound on the $\ell ^{2}$-norm of the discrete
coefficients leads to the formulation in terms of frames achieved a
notorious mainstream status at the crossroads of pure and applied
mathematics. For spaces of functions whose Fourier transform is supported in
a general measurable set,\ a theorem of Landau \cite{la67}, and a version for
radial functions \cite{AB}  turn these heuristics into a precise result. In
Gabor analysis, a similar condition for lattices in the time-frequency plane
follows from the work of Rieffel \cite{Rieffel} and has been extended to
general sets by Ramanathan and Steger \cite{RamStee,DensityGaborCHeil}
(interestingly enough, Rieffel's proof depends on arguments using the
coupling constant of von Neumann algebras; developments with a more explicit
link to signal representations can be found in \cite[Section 6]{DLL}, or in 
\cite{Bekka,Han}). An overview of these and other results is given in \cite%
{Heil} as a historical recollection, and in \cite{Survey} and \cite{Sampling1} as a technical survey exploring the
limits of existing operator theory methods. It is  clear from  these contributions that groups of exponential growth like the `$%
ax+b$'-group are beyond the scope of the methods available.

\subsection{The affine density problem}

No known concept of density has been able to capture the `folk theorem' in
the case of wavelets with positive frequencies, defined as the coefficients
of the `$ax+b$'-group representation acting on a function $\psi $ on the
Hardy space $\mathcal{H}^{2}(\mathbb{C}^{+})$ of analytic functions on the upper half-plane $\C^+$ with finite norm
$$
\|f\|_2=\sup_{s>0}\left(\int_\mathbb{R} |f(x+is)|^2dx\right)^{1/2},
$$ and originally introduced in
physics as \emph{affine coherent states}. The origin of the problem can be
traced back to the work of Perelomov in the early 70's \cite%
{PerelomovCompleteness}. In \cite{PerelomovCompleteness}, a complete
description of the completeness properties of affine coherent states indexed
by Fuchsian groups with automorphic cusp forms has been obtained for the
vector of least weight. This corresponds to the Cauchy wavelet $%
\psi_0^\alpha(t)=(t+i)^{-\alpha -1}$, whose phase space is a weighted Bergman space. The
question of a more general result has been raised in \cite{PerelomovQuestion}%
.

For band-limited functions with band-width $W$, a comparison between the number of eigenvalues
of the Toeplitz operator $T^{I}_{\Omega}=P_W\chi_I P_W$, where $\chi_I$ is the indicator function of a
region $I\subset \mathbb{R}$ and $P_W$ is the orthogonal projection onto the Paley-Wiener space of band-limited functions, and the number of sampling and interpolation points on
the same region suffices to establish that the Nyquist rate is given by $%
\frac{\text{trace}\left( T^{I}_\Omega\right) }{|I|}$, see \cite{la67}. Let $W_\psi$ denote the wavelet transform with mother wavelet $\psi$ as defined in \eqref{eq:def-wt} and set  $\mathcal{W}_\psi:=W_\psi(\H^2({\mathbb{C}}^+))$. In \cite%
{SeipConjecture}, reasoning in analogy with the aforementioned situation,
Seip conjectured that the same thing should be true for
wavelets, suggesting that the quantity $\frac{C_{\psi }}{\Vert \psi \Vert
_{2}^2}$ could stand as a Nyquist rate for wavelet systems. An analysis of
the eigenvalue profile of the   wavelet Toeplitz operator corresponding to the Cauchy wavelet $T^{I}_{\psi_0^\alpha}=P_{\mathcal{W}_{\psi_0^\alpha}}\chi_I P_{\mathcal{W}_{\psi_0^\alpha}}$ made by Daubechies
and Paul \cite{dapa88} has shown that the analogies with the euclidean
problem solved by Landau were misleading, since the error resulting from the
difference between the first two moments of $T^{I}_{\psi_0^\alpha}$ (inherited from the
exponential growth of the `$ax+b$'-group) has precisely the same asymptotic
behavior as the trace itself, when the region $I\subset \C^+$ is dilated to approach the whole
upper half plane $\mathbb{C}^{+}$. The same thing happens for general
wavelets \cite{defeno02}. Nevertheless, Seip pursued the problem further in
a body of research that culminated in a complete description of sampling and
interpolation sequences in Bergman spaces \cite{Seip2}, an elegant
adaptation of a scheme of Beurling where several ideas from Korenblum's
`hyperbolic Nevanlinna theory' \cite{Kor} have been put in use and refined.
Unfortunately, the methods of \cite{Seip2} seem to fully depend on the
holomorphic structure of the space and, as recently proved \cite%
{AnalyticWavelet}, the only choice of $\psi $ lending itself to analytic
phase spaces is (up to a chirp) the Cauchy wavelet $\psi_0^\alpha (t)=(t+i)^{-\alpha
-1}$. After 30 years of research activity on wavelet frames (mostly
concentrated in the period 1990-2010), and despite the participation of a
significant group of mathematicians who introduced new ideas and found
several interesting partial results (we refer to \cite{HK,Kutt} for a sample
of this activity), a full solution of the problem (i.e., generic wavelets
and sampling sets) has been assigned a status between `impossible' and
`hopeless' (see \cite{Kutt} for a detailed explanation of the technical and
conceptual obstructions).

\subsection{von Neumann dimension}

Let us define the functions $\psi
_{n}^{\alpha }\in \H^2(\C^+),\ n\in \mathbb{N}_{0},$ via the Fourier transform   of generalized Laguerre polynomials
\begin{equation}
(\mathcal{F}\psi _{n}^{\alpha })(\xi ):=\xi ^{\frac{\alpha }{2}}e^{-\xi
}L_{n}^{\alpha }(2\xi ),\quad \xi >0,\quad \text{with}\quad L_{n}^{\alpha }(t)=\sum_{k=0}^n (-1)^k\binom{n+\alpha}{n-k}\frac{t^k}{k!}, \label{mother}
\end{equation}%
where we follow the convention
$$
\mathcal{F}f(\xi)=\int_\mathbb{R} f(t)e^{-i\xi t}dt.
$$
 In the present paper, we  show that, at least for the orbits $\Gamma (z),\ z\in\C^+,$
of a Fuchsian group $\Gamma$  and mother wavelets chosen from the family $\{\psi
_{n}^{\alpha }\}_{n\in \mathbb{N}_{0}}$, 
 there is indeed a Nyquist rate separating frames from Riesz sequences and sampling
from interpolating sets for all possible coherent states defined on the
phase space of $\mathcal{W}_{\psi _{n}^{\alpha }}$. And %, as amazing it can be, 
such a Nyquist rate is provided by the quantity $\frac{C_{\psi _{n}^{\alpha
}}}{\Vert \psi _{n}^{\alpha }\Vert _{2}^{2}}$, in perfect agreement with
Seip's 1989 conjecture \cite{SeipConjecture,SeipConj}. The technical reason
for the accuracy of this prediction is the following. Our main contribution
for the theory of von Neumann algebras is a dimension computation associated
with a projective unitary group representation of $PSL(2,\mathbb{R})$ on $%
L^{2}(\mathbb{C}^{+},\mu )$ for an infinite family of
non-analytic reproducing kernel Hilbert spaces indexed by $n$. This relies on Jones' new method \cite{Jones} which, according to the author, incorporates a suggestion of McMullen and is inspired by Atiyah's setting
for the covering space index theorem \cite{Atiyah} and boils down to the computation of the trace of the same operator that Seip considered  \cite%
{SeipConjecture,SeipConj}. Note that our approach
extends Radulescu's computation \cite{Radu} to general $n$. 

{This new approach seems to be more elementary than Bekka's \cite%
{Bekka} (for the analytic case $n=0$), where the computation of the von
Neumann dimension follows a scheme inspired by the derivation of
Atiyah-Schmid's formula \cite{AS} and also than Radulescu's \cite{Radu}. It
allows us to derive a Nyquist rate for phase spaces of the wavelet transform
(see Theorem~\ref{thm:1}) and for super wavelet systems (see Corollary~\ref{cor:super-wavelet}) using mother wavelets associated to the eigenspaces of the Maass operator.}

\begin{comment}
To prove the main results we start with a study of geometric invariance
properties of the reproducing kernels associated to the wavelet spaces
generated by (\ref{mother}). This will allow to use properties of cyclic and
separating vectors obtained from the wavelet systems indexed by orbits of
Fuchsian groups, in particular of a a result of Romero and Velthoven relating Riesz bases and separating vectors \cite[Proposition~5.2]{RV} . The cornerstone of the argument is then an adaptation of a
method of Jones \cite{Jones}, where a new idea for computing von Neumann
dimensions is presented. Jones' method, which, according to the author,
incorporates a suggestion of McMullen and is inspired in Atiyah's setting
for the covering space index theorem \cite{Atiyah}, consists of a novel,
relatively elementary technique, relating the von Neumann dimension to the
trace of a factor, which by itself can be computed as the usual trace of a
Toeplitz-type operator (corresponding to a localization operator smoothing
the energy out of the fundamental domain of a Fuchsian group). As noticed by
Seip \cite{SeipConjecture,SeipConj}, such a trace determines the Nyquist
rates in all other known instances, but the exponential growth makes it very
difficult to use comparison methods of the type used in \cite{la67}.
\end{comment}

\subsection{Outline}

This paper is simply organized as follows: a section with the statements of
the main results and the important special cases of Bergman and Maass
spaces, and a `Background' section with the essential material on wavelets,
Fuchsian groups and von Neumann algebras is followed by a section with the proofs of
the main results.%, divided into a recollection of the fundamental observations for our setting, an auxiliary Lemma and a Proposition. With these preparations in place, the proofs of Theorem~\ref{thm:1} and its corollaries are presented as an epilogue in just a few lines.

\section{Results}

Let us
define the   projective unitary representation $\tau_n^\alpha$ of $PSL(2,\mathbb{R})$ on $L^2({%
\mathbb{C}}^+,\mu)$ by 
\begin{equation}  \label{eq:def-tau-in-results}
\tau_{n}^{\alpha }(\gamma)F(z):=\left(\frac{|cz+d|}{cz+d}\right)^{2n+%
\alpha+1}F(\gamma(z)), 
\end{equation}
where $d\mu (w)=\text{Im}(w)^{-2}dw$, $F\in L^2({\mathbb{C}}^+,\mu)$, $%
\gamma\in PSL(2,\mathbb{R})$, and $z\in {\mathbb{C}}^+$. Note that $\tau_{n}^{\alpha }$ differs from the classical representation of $PSL(2,\mathbb{R})$ as considered for example in \cite{Bekka,coxeter,Radu} by the phase factor $\big(\frac{|cz+d|}{cz+d}\big)^{2n+%
\alpha+1}$, and that, as we show later,  $\tau_n^\alpha$ leaves $\mathcal{W}_{\psi_n^\alpha}$ invariant.

For the definition of frames, Riesz sequences, uniqueness sets and sampling and interpolating sequences, we refer to Section~\ref{sec:frames}.

\begin{theorem}
\label{thm:1} Let $\Gamma \subset PSL(2,\mathbb{R})$ be a Fuchsian group
with fundamental domain $\Omega \subset \mathbb{C}^{+}$ (of finite volume)
and $F\in \mathcal{W}_{\psi_n^\alpha}$. If $\{\tau_{n}^{\alpha
}(\gamma)F\}_{\gamma \in \Gamma }$ is a frame for $\mathcal{W}_{\psi
_{n}^{\alpha }}$, then $\Omega $ is compact and 
\begin{equation}
\left\vert \Omega \right\vert \leq \frac{C_{\psi _{n}^{\alpha }}}{\Vert \psi
_{n}^{\alpha }\Vert _{2}^{2}}=\frac{2}{\alpha }\text{,}  \label{eq:thm-sam}
\end{equation}%
where $|\Omega |$ is calculated via the measure $\mu$.

If $\{\tau_{n}^{\alpha }(\gamma)F\}_{\gamma \in \Gamma }$ is a Riesz
sequence for $\mathcal{W}_{\psi _{n}^{\alpha }}$, then 
\begin{equation}
\left\vert \Omega \right\vert \geq \frac{C_{\psi _{n}^{\alpha }}}{\Vert \psi
_{n}^{\alpha }\Vert _{2}^{2}}=\frac{2}{\alpha }\text{.}  \label{eq:thm-int}
\end{equation}
\end{theorem}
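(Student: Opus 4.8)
The plan is to translate the frame/Riesz-sequence dichotomy into a statement about von Neumann algebras and their coupling constant, following the strategy outlined in the introduction. First I would introduce the von Neumann algebra $\mathcal{L}(\Gamma)$ generated by the right regular representation of $\Gamma$ (twisted by the relevant $2$-cocycle coming from the projective representation $\tau_n^\alpha$), and observe that $\tau_n^\alpha$ restricted to $\Gamma$ makes $\mathcal{W}_{\psi_n^\alpha}$ into a Hilbert module over $\mathcal{L}(\Gamma)$. The key numerical input is the von Neumann (Murray--von Neumann) dimension $\dim_{\mathcal{L}(\Gamma)}\mathcal{W}_{\psi_n^\alpha}$, which by the standard theory governs exactly whether a single generating vector can produce a frame (need $\dim \geq 1$, in fact $\dim \le 1$ forces Riesz after a frame, etc.) --- more precisely, a cyclic frame $\{\tau_n^\alpha(\gamma)F\}$ exists iff $\dim_{\mathcal{L}(\Gamma)}\mathcal{W}_{\psi_n^\alpha}\le 1$, and a Riesz sequence $\{\tau_n^\alpha(\gamma)F\}$ iff $\dim_{\mathcal{L}(\Gamma)}\mathcal{W}_{\psi_n^\alpha}\ge 1$. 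This is where the result of Romero--Velthoven relating Riesz bases to separating vectors (cited in the commented-out passage) enters, together with the basic fact that a frame that is not a Riesz basis gives a strict inequality.

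Next I would compute this von Neumann dimension. Here is where Jones' method comes in: the dimension equals the trace (in the appropriate semifinite von Neumann algebra, or equivalently a normalized trace scaled by $|\Omega|$) of the localization operator $P_{\mathcal{W}_{\psi_n^\alpha}}\chi_\Omega P_{\mathcal{W}_{\psi_n^\alpha}}$ restricted suitably, and this localization operator has an integral kernel built from the reproducing kernel $K_n^\alpha$ of $\mathcal{W}_{\psi_n^\alpha}$. Concretely, using that $\tau_n^\alpha$ acts geometrically on the reproducing kernel (a fact asserted earlier in the paper and presumably proved in the background section), the trace becomes $\int_\Omega K_n^\alpha(z,z)\,d\mu(z)$, and $PSL(2,\mathbb{R})$-covariance of the diagonal of the kernel makes $K_n^\alpha(z,z)$ constant, equal to $\|\psi_n^\alpha\|_2^2 / C_{\psi_n^\alpha}$. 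Hence
\begin{equation}
\dim_{\mathcal{L}(\Gamma)}\mathcal{W}_{\psi_n^\alpha} = |\Omega|\cdot \frac{\|\psi_n^\alpha\|_2^2}{C_{\psi_n^\alpha}} = \frac{\alpha}{2}\,|\Omega|.
\end{equation}
Combining with the dichotomy from the previous paragraph yields \eqref{eq:thm-sam} and \eqref{eq:thm-int} immediately. The compactness of $\Omega$ in the frame case should follow from the fact that a frame forces $F$ to be a nonzero vector with $\{\tau_n^\alpha(\gamma)F\}$ norm-bounded below, which via the wavelet-transform realization and the finiteness of $|\Omega|$ rules out cusps --- alternatively, one argues that an orbit with a cusp cannot support a lower frame bound because the coherent states decay along the cusp.

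The main obstacle I expect is the dimension computation itself --- specifically, making Jones' trace formula rigorous in this non-unimodular, exponentially-growing setting where the naive comparison arguments of Landau-type fail (as emphasized by Daubechies--Paul and in the introduction). One must check that the twisted group von Neumann algebra $\mathcal{L}(\Gamma)$ is a factor (or handle the general finite von Neumann algebra case), that $\mathcal{W}_{\psi_n^\alpha}$ is indeed a finite Hilbert module under $\tau_n^\alpha|_\Gamma$, and that the coupling constant coincides with the integral of the reproducing kernel over the fundamental domain; the delicate point is that the operator $P_{\mathcal{W}_{\psi_n^\alpha}}\chi_\Omega P_{\mathcal{W}_{\psi_n^\alpha}}$ is trace class with the trace computed via its kernel, which requires control of $K_n^\alpha$ near the boundary and the use of the Plancherel formula for $\Gamma$. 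A secondary subtlety is identifying $\tau_n^\alpha(\gamma)$ as genuinely unitary on $L^2(\mathbb{C}^+,\mu)$ and leaving $\mathcal{W}_{\psi_n^\alpha}$ invariant with the correct cocycle, so that the module structure is well-defined; this is presumably handled in the background section and can be invoked.
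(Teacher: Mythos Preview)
Your proposal is correct and follows essentially the same route as the paper: frame $\Rightarrow$ cyclic vector $\Rightarrow \dim_{vN_\omega(\Gamma)}\mathcal{W}_{\psi_n^\alpha}\le 1$, Riesz sequence $\Rightarrow$ separating vector (via Romero--Velthoven) $\Rightarrow \dim_{vN_\omega(\Gamma)}\mathcal{W}_{\psi_n^\alpha}\ge 1$, with the dimension computed by Jones' method as $\operatorname{trace}(Q_\Omega P_{\mathcal{W}_{\psi_n^\alpha}}Q_\Omega)=\int_\Omega k_{\psi_n^\alpha}(z,z)\,d\mu(z)=|\Omega|\,\|\psi_n^\alpha\|_2^2/C_{\psi_n^\alpha}$. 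The only point where the paper is sharper than your sketch is the compactness of $\Omega$ in the frame case, which the paper handles by a direct citation to Kelly-Lyth rather than an ad hoc cusp-decay argument.
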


\begin{remark}
Given a Hilbert space $\mathcal{H}$ and a notion of density of points, a
Nyquist rate can be formally defined as a precise value of density which
must be exceeded for the existence of frames for $\mathcal{H}$, and cannot
be exceeded for the existence of Riesz sequences for $\mathcal{H}$. There
are several notions of affine density (see those in \cite{Kutt} and
\cite{Seip2}), all of them essentially defined as the large $r$ limit of the
ratio between the number of points in a hyperbolic ball of radius $r$ and
the measure of the hyperbolic ball. For a  Fuchsian group $\Gamma $ one can
use the hyperbolic geodesic distance $d(z,w)$ with $z,w\in {\mathbb{C}}^{+}$
to define the counting function $n_{r}(z,w)=\#\{\gamma \in \Gamma
:d(w,\gamma (z))<r\}$ and the hyperbolic ball $B_{r}$. Then a result of
Patterson \cite{Patterson} gives%
\begin{equation*}
\lim_{r\rightarrow \infty }\frac{n_{r}(z,w)}{\left\vert B_{r}\right\vert }=%
\frac{1}{\left\vert \Omega \right\vert }\text{.}
\end{equation*}%
Thus, Theorem~\ref{thm:1} says that $\alpha /2$ is a Nyquist rate for\ all the Hilbert
spaces $\mathcal{W}_{\psi _{n}^{\alpha }}$, $n\in \mathbb{N}_{0}$.
\end{remark}

Applying the inverse wavelet transform to the function $\tau_n^\alpha(%
\gamma) F$, a Nyquist condition for a particular family of vectors in $\H^2({%
\mathbb{C}}^+)$ follows.

\begin{corollary}
\label{cor:other-family} Let $\Gamma \subset PSL(2,\mathbb{R})$ be a
Fuchsian group with fundamental domain $\Omega \subset \mathbb{C}^{+}$ (of
finite volume), $\Phi\in L^2({\mathbb{C}}^+,\mu)$, and the family of vectors 
$\{\varphi_\gamma\}_{\gamma\in\Gamma}$ be weakly defined by 
\begin{equation}  \label{eq:def-varphi}
\varphi_\gamma:=\int_{{\mathbb{C}}^+} \Phi(z) \left(\frac{cz+d}{|cz+d|}%
\right)^{2n+\alpha+1} \pi(\gamma(z))\psi_n^\alpha d\mu(z).
\end{equation}
If $\{\varphi_\gamma\}_{\gamma\in\Gamma}$ is a frame for $\H^2({\mathbb{C}}%
^+)$, then $\Omega$ is compact and \eqref{eq:thm-sam} holds, and if $%
\{\varphi_\gamma\}_{\gamma\in\Gamma}$ is a Riesz sequence for $\H^2({\mathbb{%
C}}^+)$, then \eqref{eq:thm-int} holds.
\end{corollary}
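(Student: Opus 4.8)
The plan is to reduce Corollary~\ref{cor:other-family} to Theorem~\ref{thm:1} by transporting the problem from the Hardy space $\mathcal{H}^2(\mathbb{C}^+)$ to the phase space $\mathcal{W}_{\psi_n^\alpha}$ through the wavelet transform. Write $W:=W_{\psi_n^\alpha}$, which (after normalizing by $C_{\psi_n^\alpha}^{1/2}$) is a unitary map of $\mathcal{H}^2(\mathbb{C}^+)$ onto $\mathcal{W}_{\psi_n^\alpha}$, and let $P:=P_{\mathcal{W}_{\psi_n^\alpha}}$, so that the adjoint is the weak synthesis operator $W^\ast G=\int_{\mathbb{C}^+}G(w)\,\pi(w)\psi_n^\alpha\,d\mu(w)$ on $L^2(\mathbb{C}^+,\mu)$ and $W^\ast=W^{-1}P$, where $W^{-1}$ is the inverse of $W\colon\mathcal{H}^2(\mathbb{C}^+)\to\mathcal{W}_{\psi_n^\alpha}$. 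Since a unitary preserves frames, Riesz sequences, and their bounds, it suffices to identify $\{\varphi_\gamma\}_{\gamma\in\Gamma}$ as the $W^{-1}$-image of a system of the form $\{\tau_n^\alpha(\gamma)F\}_{\gamma\in\Gamma}$ covered by Theorem~\ref{thm:1}.

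The computation that does this runs as follows. In \eqref{eq:def-varphi} substitute $w=\gamma(z)$; using that $\mu$ is $PSL(2,\mathbb{R})$-invariant and the cocycle relation $j(\gamma,\gamma^{-1}(w))\,j(\gamma^{-1},w)=1$ for the automorphy factor $j(\gamma,z)=cz+d$, the unimodular weight $\big(\tfrac{cz+d}{|cz+d|}\big)^{2n+\alpha+1}$ is turned into $\big(\tfrac{|j(\gamma^{-1},w)|}{j(\gamma^{-1},w)}\big)^{2n+\alpha+1}$, whence
$$\varphi_\gamma=\int_{\mathbb{C}^+}\Big(\tfrac{|j(\gamma^{-1},w)|}{j(\gamma^{-1},w)}\Big)^{2n+\alpha+1}\Phi(\gamma^{-1}(w))\,\pi(w)\psi_n^\alpha\,d\mu(w)=W^\ast\big[\tau_n^\alpha(\gamma^{-1})\Phi\big].$$
By \eqref{eq:def-tau-in-results} this equals $W^{-1}P\,\tau_n^\alpha(\gamma^{-1})\Phi=W^{-1}\tau_n^\alpha(\gamma^{-1})F$ with $F:=P\Phi\in\mathcal{W}_{\psi_n^\alpha}$, where we used that $\tau_n^\alpha$ leaves $\mathcal{W}_{\psi_n^\alpha}$ invariant and, being unitary on $L^2(\mathbb{C}^+,\mu)$, therefore commutes with $P$. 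As $\gamma\mapsto\gamma^{-1}$ is a bijection of $\Gamma$, the indexed family $\{\varphi_\gamma\}_{\gamma\in\Gamma}$ is, after this harmless reindexing, exactly $\{W^{-1}\tau_n^\alpha(\gamma)F\}_{\gamma\in\Gamma}$; moreover, as $\Phi$ ranges over $L^2(\mathbb{C}^+,\mu)$, $F=P\Phi$ ranges over all of $\mathcal{W}_{\psi_n^\alpha}$, matching the hypothesis of Theorem~\ref{thm:1}.

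It follows that $\{\varphi_\gamma\}_{\gamma\in\Gamma}$ is a frame (resp.\ a Riesz sequence) for $\mathcal{H}^2(\mathbb{C}^+)$ if and only if $\{\tau_n^\alpha(\gamma)F\}_{\gamma\in\Gamma}$ is a frame (resp.\ a Riesz sequence) for $\mathcal{W}_{\psi_n^\alpha}$ — in the remaining case $F=0$ both families are identically zero and neither is a frame nor a Riesz sequence, so the hypothesis of the corollary is then void — and Theorem~\ref{thm:1} yields at once the compactness of $\Omega$ and \eqref{eq:thm-sam} in the frame case and \eqref{eq:thm-int} in the Riesz case. The only step requiring genuine care is the displayed computation: one must verify that the weak vector integral \eqref{eq:def-varphi} is well defined (routine, since $z\mapsto\overline{\langle u,\pi(\gamma(z))\psi_n^\alpha\rangle}$ lies in $L^2(\mathbb{C}^+,\mu)$ for every $u\in\mathcal{H}^2(\mathbb{C}^+)$ and $\Phi\in L^2(\mathbb{C}^+,\mu)$, so the integrand is absolutely integrable against every $u$), and, above all, track the unimodular cocycle through the change of variables so that it reproduces the phase factor of $\tau_n^\alpha$ in \eqref{eq:def-tau-in-results} exactly rather than up to an uncontrolled $z$-dependent unimodular factor — which is precisely what the normalization chosen in \eqref{eq:def-varphi} is designed to guarantee.
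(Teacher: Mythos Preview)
Your proof is correct and follows essentially the same strategy as the paper: transport $\{\varphi_\gamma\}$ to $\mathcal{W}_{\psi_n^\alpha}$ via the wavelet transform, identify it (after the harmless reindexing $\gamma\mapsto\gamma^{-1}$) with $\{\tau_n^\alpha(\gamma)F\}_{\gamma\in\Gamma}$ for $F=P_{\mathcal{W}_{\psi_n^\alpha}}\Phi$, and invoke Theorem~\ref{thm:1}. The only cosmetic difference is that the paper computes $W_{\psi_n^\alpha}\varphi_\gamma$ by pulling the wavelet transform inside the integral and applying the kernel covariance identity \eqref{eq:identity-k-psi}, whereas you perform the change of variables $w=\gamma(z)$ directly in the defining integral and track the cocycle---both routes land on the same identification.
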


If we choose $F$ in Theorem~\ref{thm:1} to be  the reproducing kernel $k_{\psi _{n}^{\alpha
}}(z,\cdot )$ of the space $\mathcal{W}_{\psi_n^\alpha}$, then we can state the Nyquist condition for wavelet systems
generated by Fuchsian groups and wavelets $\psi _{n}^{\alpha }$. As recently observed
by Romero and Velthoven in a different formulation \cite{RV}, the following result
  can actually be obtained from the analytic case $n=0$ of Theorem~\ref{thm:1}
above, using Radulescu's extension of the dimension computation in the
projective case (non-integer $\alpha $) for $n=0$ \cite{Radu}. 

\begin{corollary}
\label{cor:wavelet} Let $\Gamma \subset PSL(2,\mathbb{R})$ be a Fuchsian
group with fundamental domain $\Omega \subset \mathbb{C}^{+}$ (of finite
volume), and $z\in \Omega $. If the family $\{\pi(\gamma(z))\psi_n^\alpha%
\}_{\gamma\in\Gamma}$ is a wavelet frame for $\H^2({\mathbb{C}}^+)$
(equivalently, if $\Gamma(z)$ is a sampling sequence for $\mathcal{W}_{\psi
_{n}^{\alpha }}$), then $\Omega$ is compact and \eqref{eq:thm-sam} holds.

Condition \eqref{eq:thm-sam} is also necessary if $\Gamma (z)$ is a
uniqueness set for the wavelet space $\mathcal{W}_{\psi _{n}^{\alpha }}$%
. Thus, if \eqref{eq:thm-int} holds, then %
%\begin{equation*}
%\left\vert \Omega \right\vert >\frac{C_{\psi _{n}^{\alpha }}}{\Vert \psi
%_{n}^{\alpha }\Vert _{2}^{2}}=\frac{2}{\alpha}\text{,}
%\end{equation*}%
there exists a function in the wavelet space $\mathcal{W}_{\psi _{n}^{\alpha
}}$ vanishing on $\Gamma (z)$.

If $\{\pi(\gamma(z))\psi_n^\alpha\}_{\gamma\in\Gamma}$ is a wavelet Riesz
sequence (equivalently, {if $\Gamma(z)$ is an} interpolating sequence for $%
\mathcal{W}_{\psi _{n}^{\alpha }} $), then \eqref{eq:thm-int} holds.
\end{corollary}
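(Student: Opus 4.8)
The plan is to deduce Corollary~\ref{cor:wavelet} from Theorem~\ref{thm:1} by pushing the wavelet system $\{\pi(\gamma(z))\psi_n^\alpha\}_{\gamma\in\Gamma}\subset\H^2(\C^+)$ through the wavelet transform and recognizing its image in $\mathcal{W}_{\psi_n^\alpha}$ as $\{\tau_n^\alpha(\gamma)k_{\psi_n^\alpha}(z,\cdot)\}_{\gamma\in\Gamma}$, up to unimodular factors. First I would record two elementary facts. The transform $W_{\psi_n^\alpha}$ is, up to a constant, a unitary map of $\H^2(\C^+)$ onto $\mathcal{W}_{\psi_n^\alpha}$, and since $F(w)=\langle f,\pi(w)\psi_n^\alpha\rangle$ exhibits $\pi(w)\psi_n^\alpha$ as the point evaluation at $w$, the function $W_{\psi_n^\alpha}(\pi(w)\psi_n^\alpha)$ equals a fixed nonzero multiple $c$ of the reproducing kernel $k_{\psi_n^\alpha}(w,\cdot)$, with $c$ independent of $w$. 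Moreover $\pi(w)$ is unitary, so $\|\pi(w)\psi_n^\alpha\|$, and hence $\|k_{\psi_n^\alpha}(w,\cdot)\|$, does not depend on $w$; this homogeneity is exactly what makes the equivalences in the corollary (sampling sequence $\leftrightarrow$ wavelet frame, interpolating sequence $\leftrightarrow$ wavelet Riesz sequence, uniqueness set $\leftrightarrow$ completeness of the reproducing kernels) exact identities rather than merely up-to-weights statements.

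Next I would invoke the covariance of the wavelet transform (proved below): a unitary representation $U$ of $PSL(2,\mathbb{R})$ on $\H^2(\C^+)$ with $W_{\psi_n^\alpha}U(\gamma)=\tau_n^\alpha(\gamma)W_{\psi_n^\alpha}$, and a unimodular cocycle $\lambda(\gamma,z)$ with $\pi(\gamma(z))\psi_n^\alpha=\lambda(\gamma,z)\,U(\gamma)\pi(z)\psi_n^\alpha$; the phase $\lambda(\gamma,z)$ is the origin of the unimodular factor in \eqref{eq:def-tau-in-results}. Applying $W_{\psi_n^\alpha}$ and using the previous paragraph yields
\begin{equation*}
c\,k_{\psi_n^\alpha}(\gamma(z),\cdot)=W_{\psi_n^\alpha}\bigl(\pi(\gamma(z))\psi_n^\alpha\bigr)=\lambda(\gamma,z)\,\tau_n^\alpha(\gamma)\,W_{\psi_n^\alpha}\bigl(\pi(z)\psi_n^\alpha\bigr)=c\,\lambda(\gamma,z)\,\tau_n^\alpha(\gamma)\,k_{\psi_n^\alpha}(z,\cdot).
\end{equation*}
Because $|\lambda(\gamma,z)|=1$, replacing the members of a frame or of a Riesz sequence by these scalar multiples conjugates the frame operator, respectively the Gram matrix, by a unitary diagonal operator on $\ell^2(\Gamma)$, so all frame and Riesz bounds are preserved, and completeness is trivially preserved. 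Since $W_{\psi_n^\alpha}$ is a multiple of a unitary, it follows that $\{\pi(\gamma(z))\psi_n^\alpha\}_{\gamma\in\Gamma}$ is a frame (resp.\ a Riesz sequence, resp.\ complete) in $\H^2(\C^+)$ if and only if $\{\tau_n^\alpha(\gamma)k_{\psi_n^\alpha}(z,\cdot)\}_{\gamma\in\Gamma}$ is a frame (resp.\ a Riesz sequence, resp.\ complete) in $\mathcal{W}_{\psi_n^\alpha}$; in the last case this is just the statement that $k_{\psi_n^\alpha}(z,\cdot)$ is a cyclic vector for the restriction of $\tau_n^\alpha$ to $\Gamma$.

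With this equivalence in hand, the frame and Riesz-sequence assertions of Corollary~\ref{cor:wavelet} follow at once from Theorem~\ref{thm:1} applied to $F=k_{\psi_n^\alpha}(z,\cdot)\in\mathcal{W}_{\psi_n^\alpha}$. For uniqueness sets Theorem~\ref{thm:1} is not literally enough, as it is phrased for frames; here I would appeal to the finer content of the von Neumann dimension computation that underlies it, namely that $\dim_\Gamma\mathcal{W}_{\psi_n^\alpha}=|\Omega|\,\|\psi_n^\alpha\|_2^2/C_{\psi_n^\alpha}=\alpha|\Omega|/2$ and that the mere existence of a cyclic vector forces this dimension to be at most $1$; hence if $\Gamma(z)$ is a uniqueness set for $\mathcal{W}_{\psi_n^\alpha}$ then $|\Omega|\le 2/\alpha$, which is \eqref{eq:thm-sam}. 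Taking the contrapositive, if $|\Omega|>2/\alpha$ then $\Gamma(z)$ is not a uniqueness set, which produces the desired nonzero element of $\mathcal{W}_{\psi_n^\alpha}$ vanishing on $\Gamma(z)$. I expect the main obstacle to lie not in this reduction, which is bookkeeping of phases and unitarity, but in cleanly isolating the implication that a cyclic vector forces von Neumann dimension at most $1$ inside the machinery behind Theorem~\ref{thm:1}, and in handling the borderline value $|\Omega|=2/\alpha$. As indicated right after the statement, an alternative for general $n$ is to route the argument through the analytic case $n=0$ of Theorem~\ref{thm:1} together with Radulescu's projective dimension formula \cite{Radu}, which avoids invoking Theorem~\ref{thm:1} directly for $n\ge 1$.
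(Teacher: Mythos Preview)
Your reduction is essentially the paper's own: both arguments push the wavelet system through $W_{\psi_n^\alpha}$, recognize the resulting family as $\{\tau_n^\alpha(\gamma)\overline{k_{\psi_n^\alpha}(z,\cdot)}\}_{\gamma\in\Gamma}$ up to unimodular scalars, and then invoke Theorem~\ref{thm:1} (and, for uniqueness sets, the underlying cyclic-vector/dimension statement directly). The paper obtains the unimodular relation straight from the kernel identity \eqref{eq:identity-k-psi}, whereas you route it through an auxiliary representation $U$ on $\H^2(\C^+)$; this is a harmless repackaging, but note that the existence of such a $U$ with the stated covariance is itself equivalent to \eqref{eq:identity-k-psi}, so your ``proved below'' step will amount to the same computation.

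One small correction: in the paper's convention $W_{\psi_n^\alpha}(\pi(w)\psi_n^\alpha)=C_{\psi_n^\alpha}\,\overline{k_{\psi_n^\alpha}(w,\cdot)}$, not a multiple of $k_{\psi_n^\alpha}(w,\cdot)$; the conjugate matters because it is $\overline{k_{\psi_n^\alpha}(z,\cdot)}$ that lies in $\mathcal{W}_{\psi_n^\alpha}$ and to which Theorem~\ref{thm:1} applies. Your concern about the borderline $|\Omega|=2/\alpha$ is legitimate, and the paper's proof in fact only derives the vanishing function under the strict inequality $|\Omega|>2/\alpha$.
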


While $\Omega $ needs to be compact for the existence of frames, this is not
the case for uniqueness sets (see the discussion in \cite[Section 5]{KL}).

Observe that the above results leave $|\Omega |=C_{\psi _{n}^{\alpha
}}/\Vert \psi _{n}^{\alpha }\Vert _{2}^{2}$ as the only possibility for $%
\{\pi (\gamma (z))\psi _{n}^{\alpha }\}_{\gamma \in \Gamma }$ to be a
wavelet Riesz basis for $\mathcal{H}^{2}(\mathbb{C}^{+})$. In the case $n=0$%
, we are led to analytic Bergman spaces, which contain no sampling and
interpolation sequences: given $\Gamma $ interpolating, pick $f$ vanishing
on the orbit of $\Gamma -\{\gamma _{0}\}$ for some $\gamma _{0}\in \Gamma $
with $f(\gamma _{0}(z))=i$, and then turn $\Gamma $ into a zero sequence
setting $F(w)=\frac{w-i}{w+i}f(w)$; thus interpolating sequences cannot be
sampling. For other values of $n$, the spaces do not seem to be closed under
shift multipliers and nothing is known about the existence of such
sequences. The perturbation argument used in \cite{Seip} also does not seem
to work here since, at least for subgroups of $PSL(2,\mathbb{Z})$, a `small'
perturbation of the parameters would change the group structure.\ We leave
this as an interesting research question. So far, the only space known to
have sampling and interpolating sequences is the Paley-Wiener space, where
the multiplier algebra is trivial (see \cite{Seipbook} for an engrossing
discussion about this).

We   show that the choice (\ref{mother}) is, within some boundaries, the
only family leading to a well-defined problem. This can be seen as a
converse of the first step in the proof of the above results, where we show
that the specific wavelet spaces are invariant under the action of Fuchsian
groups which is not the case for general wavelets. In particular, we show
that these are the only wavelet spaces invariant under representations of
the form \eqref{eq:def-tau-in-results} if we assume reasonable restrictions
on $\psi$. % For the spaces $\mathcal{W}_{\psi_n^\alpha}:=W_{\psi
%_{n}^{\alpha }}\left( \mathcal{H}^{2}(\mathbb{C}^{+})\right) $ considered in
%the above results, we define the following projective unitary group
%representation of $PSL(2,\mathbb{R})$ on $L^{2}(\mathbb{C}^{+},\mu )$, which
%boils down to the usual one in the analytic case ($n=0$):%
%\begin{equation}  \label{eq:tau-n-widehat}
%\widehat{\tau }_{n}^{\alpha }(g)F(z):=\left( \frac{|cz+d|}{cz+d}\right)
%^{2n+\alpha +1}F(g(z)).
%\end{equation}%
%Then $\widehat{\tau }_{n}^{\alpha }$ leaves the spaces $\mathcal{W}_{\psi _{n}^{\alpha
%}}$ invariant. These are the
%only wavelet spaces invariant under representations of the form %
%\eqref{eq:tau-n-widehat} if we assume reasonable restrictions on $\psi$. 
To be precise, let $\mathcal{A}\subset \mathcal{H}^{2}(\mathbb{C}^{+})$ be
defined as 
\begin{equation}  \label{def:A}
\mathcal{A}:=\big\{f\in \mathcal{H}^{2}(\mathbb{C}^{+}):\ \widehat{f}(\xi
)\in \mathbb{R},\ \widehat{f}\widehat{f}\hspace{2pt} {}^{\prime },\xi 
\widehat{f}\widehat{f}\hspace{2pt} {}^{\prime \prime }\in L^{1}(\mathbb{R}%
^{+})\big\}\text{.}
\end{equation}%
The situation is depicted in the next result, which may be of independent
interest.

\begin{theorem}
\label{thm:3} If $F\in \mathcal{W}_{\psi _{n}^{\alpha }}$, then ${\tau }%
_{n}^{\alpha }(g)F\in \mathcal{W}_{\psi _{n}^{\alpha }}$ for every $g\in
PSL(2,\mathbb{R})$. Conversely, let $\sigma_{s}$ be a projective unitary
group representation of $PSL(2,\mathbb{R})$ on $L^{2}(\mathbb{C}^{+},\mu )$
of the form 
\begin{equation*}
\sigma_{s}(g)F(z)=\left( \frac{|cz+d|}{cz+d}\right) ^{s}F(g(z)).
\end{equation*}%
If $\psi \in \mathcal{A}$ and $\sigma_{s}$ leaves $\mathcal{W}_{\psi } $
invariant, then for some $n\in \mathbb{N}_{0}$ and $C\in \mathbb{C}%
\backslash \{0\}$ we have 
\begin{equation*}
\psi =C\psi _{n}^{\alpha },
\end{equation*}%
where $\alpha =s-2n-1$.
\end{theorem}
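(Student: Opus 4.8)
The plan for the first assertion---the invariance of $\mathcal{W}_{\psi_n^\alpha}$ under $\tau_n^\alpha$ already announced after~\eqref{eq:def-tau-in-results}---is to work with reproducing kernels. The space $\mathcal{W}_{\psi_n^\alpha}$ is a reproducing kernel Hilbert space in $L^2(\mathbb{C}^+,\mu)$ spanned by the functions $k_n(\cdot,w)=W_{\psi_n^\alpha}(\pi(w)\psi_n^\alpha)=\ip{\pi(w)\psi_n^\alpha}{\pi(\cdot)\psi_n^\alpha}$, $w\in\mathbb{C}^+$. I would first compute $k_n$ from~\eqref{mother}---for $n=0$ it is a constant times $(\mathrm{Im}\,z\,\mathrm{Im}\,w)^{(\alpha+1)/2}(z-\bar w)^{-\alpha-1}$, and for general $n$ there is an extra Jacobi-polynomial factor in a cross-ratio of $z$ and $\bar w$---and then verify the covariance identity
\[ k_n\big(g(z),g(w)\big)=\left(\frac{cz+d}{|cz+d|}\right)^{2n+\alpha+1}\overline{\left(\frac{cw+d}{|cw+d|}\right)^{2n+\alpha+1}}\,k_n(z,w) \]
for $g\in PSL(2,\mathbb{R})$ with bottom row $(c,d)$, which rests only on $g(z)-\overline{g(w)}=(z-\bar w)\big((cz+d)\overline{(cw+d)}\big)^{-1}$ and $\mathrm{Im}\,g(z)=\mathrm{Im}(z)\,|cz+d|^{-2}$. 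Since $\tau_n^\alpha(g)$ is a unitary weighted composition operator on $L^2(\mathbb{C}^+,\mu)$ (the measure $\mu$ is $PSL(2,\mathbb{R})$-invariant and the multiplier unimodular), combining this identity with the formula for $\tau_n^\alpha(g)$ shows $\tau_n^\alpha(g)k_n(\cdot,w)$ to be a unimodular multiple of $k_n(\cdot,g^{-1}(w))$; hence $\tau_n^\alpha(g)$ maps the spanning set of $\mathcal{W}_{\psi_n^\alpha}$ into itself and, being bounded, preserves $\mathcal{W}_{\psi_n^\alpha}$.

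For the converse I fix $s$ and $\psi\in\mathcal{A}$ with $\mathcal{W}_\psi$ invariant under $\sigma_s$ (note that $\mathcal{W}_\psi\subset L^2(\mathbb{C}^+,\mu)$ already presupposes that $\psi$ is an admissible wavelet). The plan is to use invariance only under the maximal compact subgroup $K=\{r_\theta\}$ fixing $i$, and to turn it into an ordinary differential equation for $h:=\widehat{\psi}$. The reproducing kernel of $\mathcal{W}_\psi$ is $k_\psi(\cdot,w)=\ip{\pi(w)\psi}{\pi(\cdot)\psi}$, and since $\sigma_s(r_\theta)$ is a weighted composition operator mapping $\mathcal{W}_\psi$ onto itself, the standard adjoint identity for weighted composition operators on a reproducing kernel Hilbert space gives $\sigma_s(r_\theta)^\ast k_\psi(\cdot,i)=\overline{m_\theta(i)}\,k_\psi(\cdot,i)$ with $m_\theta(i)=e^{is\theta}$; hence $F_0:=k_\psi(\cdot,i)=W_\psi\psi=\ip{\psi}{\pi(\cdot)\psi}$ is a joint eigenvector of all $\sigma_s(r_\theta)$, and differentiating at $\theta=0$ it solves
\[ \big(is\,\mathrm{Im}(z)+(1+z^{2})\partial_z+(1+\bar z^{2})\partial_{\bar z}\big)F_0=\lambda F_0 \]
for a purely imaginary constant $\lambda$. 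Here I will use the formula $F_0(z)=\mathrm{Im}(z)^{1/2}\tfrac{1}{2\pi}\int_0^\infty h(\xi)\,h(\mathrm{Im}(z)\,\xi)\,e^{i\,\mathrm{Re}(z)\xi}\,d\xi$ (with $h$ real, since $\psi\in\mathcal{A}$); the integrability conditions $hh',\xi hh''\in L^1(\mathbb{R}^+)$ built into $\mathcal{A}$ are precisely what make $F_0$ twice differentiable with differentiation under the integral sign permitted, so that the displayed equation is legitimate.

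Next I will substitute this formula into the eigenvalue equation and Fourier transform in $\mathrm{Re}(z)$ (the conditions in $\mathcal{A}$ making all the resulting integrations by parts boundary-term free), passing to the variables $\xi>0$ and $\eta:=\mathrm{Im}(z)\,\xi$. After a cancellation of all cross terms $h'(\xi)h'(\eta)$ one is left with an identity in which the $\xi$-dependence and the $\eta$-dependence separate; fixing one of the two variables at a point where $h$ does not vanish shows that $h$ satisfies a second-order linear ODE on $(0,\infty)$, and comparing the two ODEs obtained in this way forces both $\lambda=is$ and
\[ \xi^{2}h''(\xi)+\xi h'(\xi)=\big(\xi^{2}-s\xi+C_1\big)h(\xi) \]
with $C_1$ a constant. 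It then remains to solve this equation. Its Frobenius exponents at $0$ are $\pm\sqrt{C_1}$; the condition $hh'\in L^1$ near $0$ kills the exponent $-\sqrt{C_1}$ and forbids $C_1\le 0$, so $h\sim\xi^{\alpha/2}$ with $\alpha:=2\sqrt{C_1}>0$; the substitution $h(\xi)=\xi^{\alpha/2}e^{-\xi}u(2\xi)$ converts the equation into Laguerre's equation $t u''(t)+(\alpha+1-t)u'(t)+n u(t)=0$ with $n:=(s-\alpha-1)/2$, whose solution regular at the origin is an entire Kummer function, and $h\in L^2(\mathbb{R}^+)$ at infinity forces this solution to terminate, i.e.\ $n\in\mathbb{N}_0$ and $u=c\,L_n^\alpha$. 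Fourier inversion then yields $\psi=C\psi_n^\alpha$ with $\alpha=s-2n-1$ and $C\ne 0$; conversely, a direct check that each $\psi_n^\alpha$ satisfies this ODE re-proves the first assertion.

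The hard part will be making rigorous the passage from group-level invariance of the closed subspace $\mathcal{W}_\psi$ to the pointwise eigenvalue equation for $F_0$, and the extraction of the clean ODE. Two points need care. First, one must know that $F_0$ itself---not merely abstract vectors of $\mathcal{W}_\psi$---lies in the domain of the infinitesimal generator of $\sigma_s(K)$ and is genuinely twice differentiable with the integral sign manipulable; this is exactly where the somewhat technical conditions in $\mathcal{A}$ are used, and they must also be checked to hold for every $\psi_n^\alpha$ so that the statement is not vacuous. Second, the separation-of-variables step must avoid dividing by $h$ where it vanishes: the safe route is to fix a point $\xi_0$ with $h(\xi_0)\ne 0$, deduce from that slice that $h$ solves a second-order linear ODE (so $h$ is in particular real-analytic on $(0,\infty)$), and only afterwards normalise the coefficients by exploiting the asymmetry of the roles of $\xi$ and $\eta$. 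The concluding analysis of the confluent hypergeometric equation, together with the exclusion of the degenerate cases $C_1\le 0$ (they would force $\alpha\le 0$, contradicting admissibility of $\psi$; compare $C_{\psi_n^\alpha}/\|\psi_n^\alpha\|_2^2=2/\alpha$), is then routine. A more structural alternative is available as well: $\H^2(\mathbb{C}^+)$ is irreducible under the affine ($ax+b$) subgroup, so $\mathcal{W}_\psi$ is too; a $\sigma_s$-invariant $\mathcal{W}_\psi$ must therefore be $PSL(2,\mathbb{R})$-irreducible, and the $PSL(2,\mathbb{R})$-irreducible pieces of $(L^2(\mathbb{C}^+,\mu),\sigma_s)$ occurring discretely are exactly the hyperbolic Landau levels $\mathcal{W}_{\psi_n^\alpha}$, $\alpha=s-2n-1>-1$; equality of the matrix coefficients $g\mapsto\ip{\psi}{\pi(g)\psi}$ of an irreducible representation then gives $\psi=C\psi_n^\alpha$. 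Either way, the essential step is to recognise the invariance as the eigenvalue equation for the Maass (Casimir) operator labelling these spaces.
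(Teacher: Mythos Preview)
Your argument for the first assertion is essentially the paper's: both use the explicit reproducing kernel~\eqref{eq:repr-kernel} together with the elementary identities $\mathrm{Im}\,g(z)=\mathrm{Im}(z)/|cz+d|^{2}$ and $g(z)-\overline{g(w)}=(z-\bar w)\big((cz+d)(c\bar w+d)\big)^{-1}$ to establish a covariance relation for $k_{\psi_n^\alpha}$ (the paper's~\eqref{eq:identity-k-psi}, equivalent to your displayed identity after substituting $w\mapsto g(w)$), from which invariance is immediate.

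For the converse the two arguments coincide up to the point where one deduces that $W_\psi\psi=k_\psi(\cdot,i)$ is a joint eigenvector for the rotation subgroup $K$ fixing $i$. From there the paper takes a single step: it recognises this as precisely the $SO(2)$-invariance condition of Jiang~\cite{Rotation} and invokes \cite[Theorem~2.1]{Rotation} as a black box to conclude $\psi=C\psi_n^\alpha$. You instead propose to carry out that analysis by hand---differentiating in $\theta$, passing to the Fourier side, separating variables (the cross-term cancellation you flag does indeed occur), and reducing to the confluent hypergeometric equation whose $L^2$ solutions are the Laguerre functions. Your route is correct and has the virtue of being self-contained; it also makes transparent why the class $\mathcal{A}$ is exactly right (its conditions legitimise the differentiations under the integral and the integrations by parts, and force the correct Frobenius exponent at $0$). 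The paper's route is far shorter but exports all the analytic work to the cited reference---in effect you are sketching a proof of Jiang's theorem. Your alternative structural argument via irreducibility and the discrete-series decomposition of $(L^2(\mathbb{C}^+,\mu),\sigma_s)$ is also sound and closer in spirit to the paper's reliance on an external classification.
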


The first statement of Theorem~\ref{thm:3} is crucial for the proof of
Theorem~\ref{thm:1} and its corollaries as well as for Theorem~\ref{thm:super-wavelet}.

\begin{remark}
Theorem~\ref{thm:1} is an extension of Perelomov's result \cite{PerelomovCompleteness}  for general
$PSL(2,\R)$ coherent states in terms of $|\Omega|$, as suggested in \cite{PerelomovQuestion}, but it still lacks
a sufficient condition for a given $F\in \mathcal{W}_{\psi_n^\alpha}$
to be complete, since the classes of
incomplete and Riesz sequences may or may not be disjoint. However, the observations
in the proof of Theorem~\ref{thm:1} combined with the other implication of Theorem~\ref{thm-cyclic}
allow to state a sufficient condition in the following sense: if
$
|\Omega|\leq \frac{C_{\psi_n^\alpha}}{\|\psi_n^\alpha\|^2_2}=\frac{2}{\alpha}
$
then, for every $n\geq 0$, there exists $F\in\mathcal{W}_{\psi_n^\alpha}$, such that $\{\tau_n^\alpha(\gamma)F\}_{\gamma\in\Gamma}$ is complete for $\mathcal{W}_{\psi_n^\alpha}$.
\end{remark}

%\subsubsection{Eigenspaces of the Maass operator}

An interesting special case of the above result follows from the choice $%
\alpha =2(B-n)-1$ (this leads to a finite sequence of orthogonal spaces $%
\mathcal{W}_{\psi _{n}^{2(B-n)-1}}$, while the spaces $\mathcal{W}_{\psi _{n}^{\alpha }}$ are in
general not orthogonal) considered by Mouayn \cite{Mouhayn} to attach
coherent states to hyperbolic Landau levels. Let us consider the Maass
operator 
\begin{equation*}
H_{B}:=-(z-\overline{z})^{2}\frac{\partial ^{2}}{\partial z\partial 
\overline{z}}-B(z-\overline{z})\left( \frac{\partial }{\partial z}-\frac{%
\partial }{\partial \overline{z}}\right) =-s^{2}\left( \frac{\partial ^{2}}{%
\partial x^{2}}+\frac{\partial ^{2}}{\partial s^{2}}\right) +2iBs\frac{%
\partial }{\partial x}\text{.}
\end{equation*}%
The spectrum\ of $H_{B}$ consists of\textit{\ }a continuous part $\left[
1/4,+\infty \right[ $ and a finite number of eigenvalues. To each eigenvalue 
$\lambda _{n}^{B}=(B-n)\left( B-n-1\right) $, for $n=0,1,...,\lfloor B-\frac{%
1}{2}\rfloor $, corresponds an infinite dimensional reproducing kernel
Hilbert eigenspace $A_{n}^{B}\left( \mathbb{C}^{+},\mu \right) \subset L^{2}(%
{\mathbb{C}}^{+},\mu )$. By computing the reproducing kernels (see Section~%
\ref{sec:laguerre}) we can identify the spaces $\mathcal{W}_{\psi
_{n}^{2(B-n)-1}}
=A_{n}^{B}\left( \mathbb{C}^{+},\mu \right) $.\ From Corollary~\ref{cor:wavelet} it
then follows:

\begin{corollary}\label{cor:super-wavelet}
Let $n\leq \lfloor B-\frac{1}{2}\rfloor$. If $\Gamma (z)$ is a sampling %(or just a uniqueness) 
sequence for the space $%
A_{n}^{B}\left( \mathbb{C}^{+},\mu \right) $, then $\Omega$ is compact and
\begin{equation*}
\left\vert \Omega \right\vert \leq \frac{1}{B-n-1/2}.
\end{equation*}%
If $\Gamma (z)$ is an interpolating sequence for the space $A_{n}^{B}\left( 
\mathbb{C}^{+},\mu \right) $, then%
\begin{equation*}
\left\vert \Omega \right\vert \geq \frac{1}{B-n-1/2}.
\end{equation*}
\end{corollary}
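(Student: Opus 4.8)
The plan is to deduce Corollary~\ref{cor:super-wavelet} as a direct specialization of Corollary~\ref{cor:wavelet}. The first step is to verify the identification $\mathcal{W}_{\psi_n^{2(B-n)-1}} = A_n^B(\mathbb{C}^+,\mu)$ claimed in the paragraph preceding the statement. Here one sets $\alpha = 2(B-n)-1$ in the definition \eqref{mother}, computes the reproducing kernel of $\mathcal{W}_{\psi_n^\alpha}$ (this is the content promised for Section~\ref{sec:laguerre}), and checks that it coincides with the reproducing kernel of the Maass eigenspace $A_n^B$ attached to the eigenvalue $\lambda_n^B = (B-n)(B-n-1)$; since a reproducing kernel Hilbert space is determined by its kernel, the two spaces are equal. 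The constraint $n \leq \lfloor B - \tfrac12\rfloor$ is exactly what guarantees $\alpha = 2(B-n)-1 > 0$, so that $\psi_n^\alpha$ is a genuine admissible wavelet and the whole machinery of Theorem~\ref{thm:1} applies.

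Once the identification is in hand, the second step is purely arithmetical: substitute $\alpha = 2(B-n)-1$ into the Nyquist value from \eqref{eq:thm-sam}--\eqref{eq:thm-int}, namely $C_{\psi_n^\alpha}/\|\psi_n^\alpha\|_2^2 = 2/\alpha$, to obtain
\begin{equation*}
\frac{2}{\alpha} = \frac{2}{2(B-n)-1} = \frac{1}{B-n-1/2}.
\end{equation*}
The third step is to invoke Corollary~\ref{cor:wavelet} with $\psi = \psi_n^\alpha$ for this $\alpha$: if $\Gamma(z)$ is a sampling sequence for $\mathcal{W}_{\psi_n^\alpha} = A_n^B(\mathbb{C}^+,\mu)$, then $\Omega$ is compact and $|\Omega| \leq 2/\alpha = 1/(B-n-1/2)$; if $\Gamma(z)$ is an interpolating sequence, then $|\Omega| \geq 1/(B-n-1/2)$. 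This is precisely the assertion of Corollary~\ref{cor:super-wavelet}.

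The only genuine work is the reproducing-kernel computation in the first step — matching the kernel of $\mathcal{W}_{\psi_n^\alpha}$ against the known kernel of the Maass eigenspace $A_n^B$. I expect this to reduce to a classical identity for generalized Laguerre polynomials (a bilinear generating-function or Christoffel--Darboux-type formula), already carried out in \cite{Mouhayn} for the coherent-state side; so the obstacle is bookkeeping rather than conceptual. Everything downstream is a substitution into a formula already proved. \pbox
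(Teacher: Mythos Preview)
Your proposal is correct and matches the paper's own derivation exactly: the paper explicitly states just before the corollary that the identification $\mathcal{W}_{\psi_n^{2(B-n)-1}} = A_n^B(\mathbb{C}^+,\mu)$ follows from the reproducing-kernel computation in Section~\ref{sec:laguerre}, and then simply declares ``From Corollary~\ref{cor:wavelet} it then follows'' --- there is no separate proof. Your substitution $\alpha = 2(B-n)-1$ yielding $2/\alpha = 1/(B-n-1/2)$ is precisely the intended arithmetic.
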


It follows from the result above that, if $\left\vert \Omega \right\vert >\frac{1}{B-n-1/2}$, then
one can find \ $F\in A_{n}^{B}\left( \mathbb{C}^{+},\mu \right) $\ vanishing
on $\Gamma (z)$. While in \cite{Jones} explicit examples of functions in $%
A_{0}^{B}\left( \mathbb{C}^{+},\mu \right) $ vanishing on $\Gamma (z)$\ are
provided by (analytic) automorphic forms, explicit examples of functions in $%
A_{n}^{B}\left( \mathbb{C}^{+},\mu \right) $ (which are only real analytic
and polyanalytic) are provided by the theory of Maass forms \cite%
{Maass,Roelcke}. Thus, it may be possible to use Maass cusp forms to adapt
the construction in \cite[Section 7]{Jones}, but we will not pursue this
topic in the present paper.

There is more to be said here. If we define
\begin{equation}\label{eq:def-oplus}
\textbf{A}_{N}^{B}\left( \mathbb{C}^{+},\mu \right) :=\bigoplus_{n=0}^{N-1} A_{n}^{B}\left( \mathbb{C}^{+},\mu \right)=\bigoplus_{n=0}^{N-1} \mathcal{W}_{\psi_n^{2(B-n)-1}},
\end{equation}
then there  also exist a Nyquist rate for $\textbf{A}_{N}^{B}\left( \mathbb{C}^{+},\mu \right)$ separating sampling and interpolation sequences:

\begin{theorem}\label{thm:super-wavelet}
Let $N\in\{1,\ldots, \lfloor B-\frac{1}{2}\rfloor\}$. If $\Gamma (z)$ is a sampling %(or just a uniqueness) 
sequence for the space $%
\textbf{A}_{N}^{B}\left( \mathbb{C}^{+},\mu \right) $, then $\Omega$ is compact and
\begin{equation}\label{eq:super-nyq}
\left\vert \Omega \right\vert \leq \frac{2}{N(2B-N)}.
\end{equation}
If $\Gamma (z)$ is an interpolating sequence for the space $\textbf{A}_{N}^{B}\left( 
\mathbb{C}^{+},\mu \right) $, then%
\begin{equation}
\left\vert \Omega \right\vert \geq \frac{2}{N(2B-N)}.
\end{equation}
\end{theorem}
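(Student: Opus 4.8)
The plan is to derive Theorem~\ref{thm:super-wavelet} from the dimension machinery behind Theorem~\ref{thm:1}, the only new ingredient being that the finite family of spaces in \eqref{eq:def-oplus} carries a \emph{single} projective representation. Write $\alpha_n:=2(B-n)-1$. The point is that the phase exponent $2n+\alpha_n+1$ in \eqref{eq:def-tau-in-results} equals $2B$ for every $n$, so $\tau_n^{\alpha_n}$ is nothing but the restriction to $\mathcal{W}_{\psi_n^{\alpha_n}}=A_n^B(\mathbb{C}^+,\mu)$ of the representation $\sigma_{2B}$ of Theorem~\ref{thm:3}. By the first assertion of Theorem~\ref{thm:3} each $A_n^B(\mathbb{C}^+,\mu)$ is $\sigma_{2B}$-invariant, hence so is $\textbf{A}_N^B(\mathbb{C}^+,\mu)=\bigoplus_{n=0}^{N-1}A_n^B(\mathbb{C}^+,\mu)$, and $\bigoplus_{n=0}^{N-1}\tau_n^{\alpha_n}=\sigma_{2B}\big|_{\textbf{A}_N^B}$ is a projective unitary representation of $PSL(2,\mathbb{R})$ on $\textbf{A}_N^B(\mathbb{C}^+,\mu)$. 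Its reproducing kernel is $k_{\textbf{A}_N^B}=\sum_{n=0}^{N-1}k_{\psi_n^{\alpha_n}}$ and inherits the $\sigma_{2B}$-covariance of the summands.

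Next I would repeat, for the direct-sum space, the passage from Theorem~\ref{thm:1} to Corollary~\ref{cor:wavelet}: the orbit $\Gamma(z)$ is a sampling sequence for $\textbf{A}_N^B(\mathbb{C}^+,\mu)$ precisely when $\{k_{\textbf{A}_N^B}(\gamma(z),\cdot)\}_{\gamma\in\Gamma}$ is a frame, and by covariance this family coincides, up to unimodular scalars, with $\{\sigma_{2B}(\gamma)k_{\textbf{A}_N^B}(z,\cdot)\}_{\gamma\in\Gamma}$; similarly, $\Gamma(z)$ interpolating corresponds to a Riesz sequence. Applying Theorem~\ref{thm-cyclic} to the module $\textbf{A}_N^B(\mathbb{C}^+,\mu)$ over the group von Neumann algebra of $\Gamma$, a frame of this form forces the von Neumann dimension $\dim_\Gamma\textbf{A}_N^B(\mathbb{C}^+,\mu)\le 1$ and a Riesz sequence forces $\dim_\Gamma\textbf{A}_N^B(\mathbb{C}^+,\mu)\ge 1$.

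It remains to compute this dimension. Because the localization operator $P_{\textbf{A}_N^B}\chi_\Omega P_{\textbf{A}_N^B}$ is block diagonal with blocks $T^\Omega_{\psi_n^{\alpha_n}}$, Jones' trace computation used for Theorem~\ref{thm:1} applies verbatim and the dimension is additive over the direct sum. Since the proof of Theorem~\ref{thm:1} yields $\dim_\Gamma A_n^B(\mathbb{C}^+,\mu)=|\Omega|\,\|\psi_n^{\alpha_n}\|_2^2/C_{\psi_n^{\alpha_n}}=|\Omega|\,\alpha_n/2=|\Omega|\,(B-n-\tfrac12)$, one gets
\[
\dim_\Gamma\textbf{A}_N^B(\mathbb{C}^+,\mu)=|\Omega|\sum_{n=0}^{N-1}\Big(B-n-\tfrac12\Big)=|\Omega|\,\frac{N(2B-N)}{2}.
\]
Hence $\Gamma(z)$ sampling gives $|\Omega|\,\tfrac{N(2B-N)}{2}\le1$, i.e.\ \eqref{eq:super-nyq}, and $\Gamma(z)$ interpolating gives $|\Omega|\ge\tfrac{2}{N(2B-N)}$.

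The routine parts are the direct-sum bookkeeping and the arithmetic $\sum_{n=0}^{N-1}(B-n-\tfrac12)=\tfrac{N(2B-N)}{2}$; the two points needing care are that $\sigma_{2B}\big|_{\textbf{A}_N^B}$ together with $\Gamma$ indeed falls within the scope of Theorem~\ref{thm-cyclic} and Jones' method — immediate here since $\textbf{A}_N^B$ sits in the same ambient space $L^2(\mathbb{C}^+,\mu)$ already treated and the representation is a restriction of $\sigma_{2B}$ — and the \emph{compactness} of $\Omega$ in the sampling case. The latter does not follow from the dimension inequality (a non-compact finite-volume $\Omega$ is compatible with $\dim_\Gamma\textbf{A}_N^B\le1$) and, since a frame for a direct sum need not restrict to a frame on each summand, it cannot be reduced to the $n=0$ case; I would instead re-run the cusp argument from the proof of Theorem~\ref{thm:1} directly for the kernel $k_{\textbf{A}_N^B}$, showing that a frame of reproducing kernels is incompatible with a cusp of $\Gamma$. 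This is the main obstacle.
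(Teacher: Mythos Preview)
Your proposal is correct and follows essentially the same route as the paper's proof: the common phase exponent $2B$, the resulting $\sigma_{2B}$-invariance of $\textbf{A}_N^B(\mathbb{C}^+,\mu)$, the kernel-diagonal sum $\sum_{n=0}^{N-1}(B-n-\tfrac12)=N(2B-N)/2$, and the reduction to the cyclic/separating dichotomy all appear there, with the remaining steps simply deferred to the proof of Theorem~\ref{thm:1}. Your flagged ``main obstacle'' about compactness is overstated: in Theorem~\ref{thm:1} the paper disposes of it by citing \cite[Section~5]{KL}, and since that argument concerns only the orbit geometry of $\Gamma$ in $\mathbb{C}^+$ rather than the particular invariant subspace, it applies verbatim to a frame $\{\sigma_{2B}(\gamma)F\}_{\gamma\in\Gamma}$ in $\textbf{A}_N^B(\mathbb{C}^+,\mu)$ without any reduction to the summands $A_n^B$.
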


An equivalent interpretation of Theorem~\ref{thm:super-wavelet} (in the spirit of the second identity of \eqref{eq:def-oplus}) is that \eqref{eq:super-nyq} is a necessary condition for the vector valued system $\{\pi(\gamma(z))\boldsymbol{\phi}_N^B\}_{\gamma\in\Gamma}\subset \H^2(\C^+,\C^{N})$, where  $$\boldsymbol{\phi}_N^B:=\big(\psi_0^{2B-1},\psi_1^{2B-3},...,\psi_{N-1}^{2(B-N)+1}\big),$$ to be a \emph{wavelet superframe} for $\H^2(\C^+,\C^{N})$, see e.g.  \cite{ab17} for further information on super frames.  In particular, we get the following result.
\begin{corollary}
Let $N\in\{1,\ldots, \lfloor B-\frac{1}{2}\rfloor\}$. If $\{\pi(\gamma (z))\boldsymbol{\phi}_N^B\}_{\gamma\in\Gamma}$ is a wavelet superframe for $\H^2(\C^+,\C^N)$, then $\Omega$ is compact and
\begin{equation} 
\left\vert \Omega \right\vert \leq \frac{2}{N(2B-N)}.
\end{equation}
%If $\{\Gamma (z)\boldsymbol{\phi}_N^B\}$ is a  Riesz sequence for $\H^2(\C^+,\C^N)$, then%
%\begin{equation}
%\left\vert \Omega \right\vert \geq \frac{2}{N(2B-N)}.
%\end{equation}
\end{corollary}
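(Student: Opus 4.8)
The plan is to deduce this corollary from Theorem~\ref{thm:super-wavelet} by converting the wavelet superframe hypothesis into the hypothesis that $\Gamma(z)$ is a sampling sequence for the orthogonal direct sum $\textbf{A}_N^B(\C^+,\mu)$ --- this is exactly the equivalence announced immediately after that theorem. Abbreviate $\psi_n:=\psi_n^{2(B-n)-1}$. First I would recall that for each $n$ the wavelet transform $W_{\psi_n}$ satisfies $\langle f,\pi(w)\psi_n\rangle=(W_{\psi_n}f)(w)$ and maps $\H^2(\C^+)$ boundedly and injectively onto $\mathcal{W}_{\psi_n}$, with $\|W_{\psi_n}f\|^2_{L^2(\C^+,\mu)}=C_{\psi_n}\|f\|_2^2$; and, crucially, that by the identification $\mathcal{W}_{\psi_n}=A_n^B(\C^+,\mu)$ from Section~\ref{sec:laguerre} the spaces $\mathcal{W}_{\psi_0},\dots,\mathcal{W}_{\psi_{N-1}}$ are pairwise orthogonal in $L^2(\C^+,\mu)$. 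It follows that
$$
\mathcal{V}\colon \H^2(\C^+,\C^N)\longrightarrow \textbf{A}_N^B(\C^+,\mu),\qquad \mathcal{V}(f_0,\dots,f_{N-1}):=\sum_{n=0}^{N-1}W_{\psi_n}f_n,
$$
is a bounded linear bijection with bounded inverse onto the internal orthogonal direct sum $\bigoplus_{n=0}^{N-1}A_n^B(\C^+,\mu)=\textbf{A}_N^B(\C^+,\mu)$: injectivity and surjectivity come from the orthogonality together with the injectivity of each $W_{\psi_n}$, and $\|\mathcal{V}\mathbf{f}\|^2=\sum_n C_{\psi_n}\|f_n\|_2^2$ is equivalent to $\|\mathbf{f}\|^2$.

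Next I would compute, for $\mathbf{f}=(f_0,\dots,f_{N-1})\in\H^2(\C^+,\C^N)$ and $\gamma\in\Gamma$,
$$
\langle \mathbf{f},\pi(\gamma(z))\boldsymbol{\phi}_N^B\rangle_{\H^2(\C^+,\C^N)}=\sum_{n=0}^{N-1}\langle f_n,\pi(\gamma(z))\psi_n\rangle=\sum_{n=0}^{N-1}(W_{\psi_n}f_n)(\gamma(z))=(\mathcal{V}\mathbf{f})(\gamma(z)),
$$
the point evaluation on the right being legitimate because $\textbf{A}_N^B(\C^+,\mu)$ is a reproducing kernel Hilbert space (with kernel $\sum_n k_n^B$). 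Hence $\sum_{\gamma\in\Gamma}|\langle \mathbf{f},\pi(\gamma(z))\boldsymbol{\phi}_N^B\rangle|^2=\sum_{\gamma\in\Gamma}|(\mathcal{V}\mathbf{f})(\gamma(z))|^2$, and since $F:=\mathcal{V}\mathbf{f}$ runs through all of $\textbf{A}_N^B(\C^+,\mu)$ as $\mathbf{f}$ runs through $\H^2(\C^+,\C^N)$, with $\|F\|^2$ equivalent to $\|\mathbf{f}\|^2$, the system $\{\pi(\gamma(z))\boldsymbol{\phi}_N^B\}_{\gamma\in\Gamma}$ is a frame for $\H^2(\C^+,\C^N)$ if and only if there are constants $0<A\le B'<\infty$ with $A\|F\|^2\le\sum_{\gamma\in\Gamma}|F(\gamma(z))|^2\le B'\|F\|^2$ for every $F\in\textbf{A}_N^B(\C^+,\mu)$, i.e.\ if and only if $\Gamma(z)$ is a sampling sequence for $\textbf{A}_N^B(\C^+,\mu)$. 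Theorem~\ref{thm:super-wavelet} then yields that $\Omega$ is compact and $|\Omega|\le\frac{2}{N(2B-N)}$.

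The only step that needs genuine care, rather than routine bookkeeping, is the pairwise orthogonality of the spaces $\mathcal{W}_{\psi_n^{2(B-n)-1}}$ and the resulting description of $\textbf{A}_N^B(\C^+,\mu)$ as an orthogonal direct sum with reproducing kernel $\sum_n k_n^B$: this is precisely what the identification $\mathcal{W}_{\psi_n^{2(B-n)-1}}=A_n^B(\C^+,\mu)$ of Section~\ref{sec:laguerre} delivers, and the choices $\alpha=2(B-n)-1$ --- as opposed to a common $\alpha$, for which the $\mathcal{W}_{\psi_n^\alpha}$ need not be orthogonal --- are exactly what make $\mathcal{V}$ land in an orthogonal direct sum. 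Everything else is the standard dictionary between coherent-state (super)frames and reproducing-kernel sampling, so I expect no real obstacle beyond that verification; indeed the corollary is nothing more than the promised reformulation of Theorem~\ref{thm:super-wavelet}.
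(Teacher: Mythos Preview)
Your proposal is correct and follows exactly the route the paper indicates: the paper presents this corollary as an ``equivalent interpretation'' of Theorem~\ref{thm:super-wavelet} via the second identity in \eqref{eq:def-oplus} and does not spell out a separate proof, while you carefully supply the dictionary between the superframe inequality for $\boldsymbol{\phi}_N^B$ and the sampling inequality for $\textbf{A}_N^B(\C^+,\mu)$ through the bijection $\mathcal{V}$. Your emphasis on the pairwise orthogonality of the spaces $\mathcal{W}_{\psi_n^{2(B-n)-1}}=A_n^B(\C^+,\mu)$ as the one genuinely non-routine ingredient is exactly right, and is precisely what the paper invokes when it singles out the choice $\alpha=2(B-n)-1$.
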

Note that  a similar necessary condition for super-wavelet frames generated by mother wavelets connected to the true polyanalytic Bergman  spaces and the hyperbolic lattice was shown in \cite{ab17}. We also refer to \cite{grlyu09} for a  result by Gr\"ochenig and Lyubarskii in the context of Gabor superframes with Hermite windows and of Balan \cite{Balan} providing necessary conditions for general Gabor superframes.

As a final remark, we outline the physical relevance of the results. The
Maass operator has been considered \cite{AoP,Comtet} as an alternative model
for the formation of Landau levels, an important model in the physics of the
Quantum-Hall effect. Here, a constant magnetic field of intensity $B$ acts
on the open hyperbolic plane realized as the Poincar\'{e} upper half-plane $%
\mathbb{C}^{+}$, leading to the formation of a mixed spectrum with a {%
discrete part} corresponding to \textit{bound states }(\emph{hyperbolic
Landau levels}), and a continuous part corresponding to \textit{scattering
states}. The finite number of the levels, which depends on the strength of
the magnetic field is, in principle, a physical advantage of such a model. In 
\cite{AoP} the first bounds on the size of the fundamental region, which
correspond to a `cell' (Pauli exclusion principle allows only one electron
per cell), have been obtained. Note that the above improves the main result of \cite%
{AoP} by a factor of $n+1$. Given a sample modelled by a bounded region $%
\Delta \subset \mathbb{C}^{+}$ in a hyperbolic Landau level, assuming the
Pauli exclusion principle, Corollary~\ref{cor:wavelet} provides estimates for the number
of particles distributed by the model on such a region.

\section{Background}

\subsection{Wavelets on the Hardy space}

\label{sec:wavelets}

The affine group $G_{a}=\mathbb{R}\times \mathbb{R}^{+}$ is defined by the
group law 
\begin{equation*}
(b,a)\cdot (x,s)=(ax+b,as),\quad (b,a),(x,s)\in G_{a}.
\end{equation*}%
Here, we   identify $G_{a}$ with the upper half plane $\mathbb{C}^{+}$
and note that the left-invariant Haar measure on the affine group is given
by 
\begin{equation*}
d\mu (z):=\frac{dxdy}{s^{2}},\quad \text{ where }z=x+is\in \mathbb{C}^{+}.
\end{equation*}%
Let us define a unitary representation of $G_{a}$ on the Hardy space $%
\mathcal{H}^{2}(\mathbb{C}^{+})$ by 
\begin{equation*}
\pi (z)\psi (t):=T_{x}D_{s}\psi (t)=\frac{1}{\sqrt{s}}\psi \left( \frac{t-x}{%
s}\right) ,\quad z=x+is\in \mathbb{C}^{+}.
\end{equation*}%
The \emph{wavelet transform} of a function $f\in \mathcal{H}^{2}(\mathbb{C}%
^{+})$ using the \emph{mother wavelet} $\psi \in \mathcal{H}^{2}(\mathbb{C}%
^{+})$ is then defined as 
\begin{equation}\label{eq:def-wt}
W_{\psi }f(z):=\langle f,\pi (z)\psi \rangle =\int_{\mathbb{R}}f(t)\overline{%
T_{x}D_{s}\psi (t)}dt=\sqrt{s}\int_{0}^{\infty }\widehat{f}(\xi)\overline{%
\widehat{g}\left(s \xi \right) }e^{ix\xi}{d\xi},
\end{equation}%
where $\psi $ satisfies the \emph{admissibility condition} 
\begin{equation*}
C_{\psi }:=\int_{\mathbb{R}^{+}}|\widehat{\psi }(\omega )|^{2}\frac{d\omega 
}{|\omega |}<\infty .
\end{equation*}%
%
%
%Note that for $s>0$ one has 
%\begin{equation}
%C_{D_{s}\psi }=sC_{\psi },  \label{eq:dil-cpsi}
%\end{equation}%
%i.e. any admissible wavelet can be dilated such that the resulting
%admissibility constant equals one. 
For $f_{1},f_{2}\in \mathcal{H}^{2}(\mathbb{C}^{+})$ and $\psi $ admissible,
the following orthogonality relation holds 
\begin{equation}
\int_{\mathbb{C}^{+}}W_{\psi }f_{1}(z)\overline{W_{\psi }f_{2}(z)}d\mu
(z)=C_{\psi }\langle f_{1},f_{2}\rangle .  \label{eq:orth-rel}
\end{equation}%
In particular, if we set $f_{2}=\pi (z)\psi $, then for every $f\in \mathcal{%
H}^{2}(\mathbb{C}^{+})$ one has 
\begin{equation}
W_{\psi }f(z)=\frac{1}{C_{\psi }}\int_{\mathbb{C}^{+}}W_{\psi }f(w)\langle
\pi (w)\psi ,\pi (z)\psi \rangle d\mu (z),\quad z\in \mathbb{C}^{+}.
\label{eq:rep-eq}
\end{equation}%
This reproducing formula then implies that the range of the wavelet
transform 
\begin{equation*}
\mathcal{W}_{\psi }:=\big\{F\in L^{2}(\mathbb{C}^{+},\mu ):\ F=W_{\psi }f,\
f\in \mathcal{H}^{2}(\mathbb{C}^{+})\big\}
\end{equation*}%
is a reproducing kernel subspace of $L^{2}(\mathbb{C}^{+},\mu )$ with kernel 
\begin{equation*}
k_{\psi }(z,w)=\frac{1}{C_{\psi }}\langle \pi (w)\psi ,\pi (z)\psi \rangle 
%=\frac{1}{C_{\psi }}W_{\psi }\psi ({w}^{-1}\cdot z)
,\quad \text{and}\quad
k_{\psi }(z,z)=\frac{\Vert \psi \Vert _{2}^{2}}{C_{\psi }}.
\end{equation*}%
Moreover, $k_\psi$ is the integral kernel of the projection operator $P_\psi:L^2(\C^+,\mu)\rightarrow \mathcal{W}_\psi$.
The reproducing kernel of $\mathcal{W}_\psi$ can easily be computed using the Fourier transform $%
\mathcal{F}:\mathcal{H}^{2}(\mathbb{C}^{+})\rightarrow L^{2}(0,\infty )$ as
follows%
\begin{align}
\begin{split}
\left\langle \pi(w)g,\pi ( z)g\right\rangle &=\left\langle \mathcal{F}%
(\pi(w)g),\mathcal{F}(\pi ( z)g)\right\rangle _{L^{2}(0,\infty )} \\
&=\left(s s^{\prime }\right)^{\frac{1}{2}}\int_{0}^{\infty }\widehat{g}%
(s^\prime \xi )\overline{\widehat{g}\left(s\xi \right)}e^{i(x-x^{\prime
})\xi }d\xi \text{.}  \label{Kg2}
\end{split}%
\end{align}

\subsection{Wavelet frames and Riesz sequences}\label{sec:frames}

Let $\H$ be a separable Hilbert space and $\mathcal{I}$ be a discrete index set. The family of vectors  $\{\psi_k\}_{k\in \mathcal{I}}$ is called a \emph{frame}
 if there exist
constants $A,B>0$ such that, for every $f\in \mathcal{H}$, one has 
\begin{equation}
A\Vert f\Vert ^{2}\leq \sum_{k\in\mathcal{I}}|\left\langle f,\psi_k \right\rangle |^{2}\leq B\Vert f\Vert ^{2}.  \label{WaveletFrame}
\end{equation}
If, on the other hand,  there
exist constants $A,B>0$ such that, for every $c\in \ell ^{2}(\mathcal{I})$
\begin{equation}
A\Vert c\Vert _{\ell ^{2}(\mathcal{I} )}^{2}\leq \Big\|\sum_{k\in\mathcal{I}
}c_{k}\psi_k \Big\|^{2}\leq B\Vert c\Vert _{\ell
^{2}(\mathcal{I} )}^{2},  \label{WaveletRiesz}
\end{equation}%
then $\{\psi_k\}_{k\in \mathcal{I}}$ is called a \emph{Riesz sequence}. A Riesz sequence that is also a frame is called a \emph{Riesz basis}.

Let $\Gamma \subset \mathbb{C}^{+}$ be a discrete set and $\psi \in \mathcal{%
H}^{2}(\mathbb{C}^{+})$ be admissible. We call $\{\pi (\gamma )\psi
\}_{\gamma \in \Gamma }$   a \emph{wavelet frame} (resp. \emph{wavelet Riesz sequence}) if it is a frame (resp. Riesz sequence) for $\H^2(\C^+)$.
Using (\ref{eq:rep-eq}) we can observe that $\{\pi (\gamma )\psi
\}_{\gamma \in \Gamma }$ is a frame for $\H^2(\C^+)$ if and only if 
  $\left\{ \overline{k_{\psi }(\gamma ,\cdot )}\right\} _{\gamma \in
\Gamma }$\ is a frame for $\mathcal{W}_{\psi }$. In particular, $\left\{ 
\overline{k_{\psi }(\gamma ,\cdot )}\right\} _{\gamma \in \Gamma }$ is \emph{%
dense} in $\mathcal{W}_{\psi }$.  

We say that $\Gamma $ is a \emph{sampling sequence} for the wavelet space $%
\mathcal{W}_{\psi }$ if there exist constants $A,B>0$ such that for every $%
F\in \mathcal{W}_{\psi }$ 
\begin{equation*}
A\Vert F\Vert ^{2}\leq \sum_{\gamma \in \Gamma }{|F(\gamma )|^{2}}\leq
B\Vert F\Vert ^{2},
\end{equation*}%
and $\Gamma $ is called a \emph{uniqueness set} for $\mathcal{W}_{\psi }$ if $F(\gamma)=0,$ for every $\gamma\in\Gamma$ implies $F=0$.
 %Using (\ref{eq:rep-eq}) we can observe that this is equivalent to saying
%that $\left\{ \overline{k_{\psi }(\gamma ,\cdot )}/\left\Vert k_{\psi
%_{n}^{\alpha }}(\gamma ,\cdot )\right\Vert \right\} _{\gamma \in \Gamma }$\
%is a frame for $\mathcal{W}_{\psi }$. 
Likewise, we say that $\Gamma $ is an \emph{interpolating sequence} if $\left\{ 
\overline{k_{\psi }(\gamma ,\cdot )}
%/k_{\psi _{n}^{\alpha }}(\gamma ,\gamma )^{1/2}
\right\} _{\gamma \in \Gamma }$\ is a Riesz sequence for $\mathcal{W}_{\psi }
$.

For $\boldsymbol{f}=(f_0,f_1,...,f_{N-1}),\boldsymbol{\psi}=(\psi_0,\psi_1,...,\psi_{N-1})\in \H^2(\C^+,\C^{N})$ we define  
$$
\boldsymbol{W}_{\boldsymbol{\psi}}\boldsymbol{f}(z):=\sum_{n=0}^{N-1} W_{\psi}f_n(z),\quad z\in\C^+,
$$
and say that $\{\pi(\gamma(z))\boldsymbol{\psi}\}_{\gamma\in\Gamma}$ is a \emph{wavelet superframe} if there exist $A,B>0$ such that for every $\boldsymbol{f}\in\H^2(\C^+,\C^N)$
$$
A\|\boldsymbol{f}\|_{\H^2(\C^+,\C^{N})}^2\leq \sum_{\gamma\in\Gamma}|\boldsymbol{W}_{\boldsymbol{\psi}}\boldsymbol{f}(\gamma)|^2\leq B\|\boldsymbol{f}\|_{\H^2(\C^+,\C^{N})}^2.
$$

\subsection{The Laguerre functions}

\label{sec:laguerre} Throughout this contribution we are interested in
analyzing wavelets $\psi _{n}^{\alpha }$ which are defined via their Fourier
transform in terms of the \emph{generalized Laguerre polynomials} $%
L_{n}^{\alpha }$ as 
\begin{equation*}
(\mathcal{F}\psi _{n}^{\alpha })(\xi ):=\xi ^{\frac{\alpha }{2}}e^{-\xi
}L_{n}^{\alpha }(2\xi ),\quad \text{with}\quad L_{n}^{\alpha }(\xi
)=\sum_{k=0}^{n}(-1)^{k}\binom{n+\alpha }{n-k}\frac{\xi ^{k}}{k!}.
\end{equation*}%
The orthogonality relation for generalized Laguerre polynomials reads 
\begin{equation*}
\int_{0}^{\infty }t^{\alpha }e^{-t}L_{n}^{\alpha }(t)L_{m}^{\alpha }(t)dt=%
\frac{\Gamma (n+\alpha +1)}{n!}\delta _{n,m}.
\end{equation*}%
From \cite[Equation (4)]{srimaal03}, it follows that 
\begin{equation}
\int_{0}^{\infty }t^{\alpha -1}e^{-t}L_{n}^{\alpha }(t)^{2}dt=\frac{\Gamma
(n+\alpha +1)}{n!\alpha}.  \label{eq:formula-norm}
\end{equation}

\noindent Consequently, 
\begin{equation}
\frac{C_{\psi _{n}^{\alpha }}}{\Vert \psi _{n}^{\alpha }\Vert _{2}^{2}}=%
\frac{2 }{\alpha} .  \label{eq:frac-c-norm}
\end{equation}%
For $z=x+is,w=y+iv\in {\mathbb{C}}^{+}$, the reproducing kernel of $\mathcal{%
W}_{\psi _{n}^{\alpha }}$ is given by 
\begin{equation}
k_{\psi _{n}^{\alpha }}(z,w)=2^\alpha \alpha\left( \frac{\overline{z}-w}{z-%
\overline{w}}\right) ^{n}\left( \frac{\sqrt{sv}}{-i(z-\overline{w})}\right)
^{\alpha +1}P_{n}^{(\alpha ,0)}\left( 1-\frac{8sv}{|z-\overline{w}|^{2}}%
\right) ,  \label{eq:repr-kernel}
\end{equation}%
where $P_{n}^{(\alpha ,\beta )}$ denotes the Jacobi polynomial 
\begin{equation*}
P_{n}^{(\alpha ,\beta )}(t)=\frac{\Gamma (\alpha +n+1)}{n!\Gamma (\alpha
+\beta +n+1)}\sum_{k=0}^{n}\binom{n}{k}\frac{\Gamma (\alpha +\beta +n+k+1)}{%
\Gamma (\alpha +k+1)}\left( \frac{t-1}{2}\right) ^{k}.
\end{equation*}%
Note that \eqref{eq:repr-kernel} can be derived using \eqref{Kg2} and \cite[%
p. 809, 7.414 (4)]{Grad}.

\subsection{A particular representation of \emph{PSL}($\pmb{2,\mathbb{R}}$)
and Fuchsian groups}

\label{sec:fuchs}

%In this section we define a projective unitary group representation of $%
%PSL(2,\mathbb{R})$ on $L^{2}(\mathbb{C}^{+},\mu )$, which boils down to the
%usual one in the analytic case. We theproceed with the necessary basic
%definitions and then define the appropriate  representations.

Let $I$ be the identity matrix in $\mathbb{R}^{2}$. The quotient group 
\begin{equation*}
PSL (2,\mathbb{R}):=SL(2,\mathbb{R})/\{\pm I\}
\end{equation*}%
is also known as the group of \emph{Moebius transformations} and can be
identified with the \emph{Poincar\'{e} half plane} $\mathbb{C}^{+}$. The element $g=%
\left(\begin{smallmatrix}
a & b \\ 
c & d%
\end{smallmatrix} \right)%
\in PSL (2,\mathbb{R})$ acts on $\mathbb{C}^{+}$ as 
\begin{equation*}
g(z):=\frac{az+b}{cz+d}.
\end{equation*}%
Note that the measure $\mu$ is invariant under the action of $g\in PSL(2,%
\mathbb{R})$. Let us define $j:PSL (2,\mathbb{R})\times\mathbb{C}%
^+\rightarrow \mathbb{C}\backslash\{0\}$ by 
\begin{equation*}
j(g,z)=(cz+d)^{-1}.
\end{equation*}
It is easy to check that $j$ satisfies the \emph{cocycle condition} 
\begin{equation*}
j(gh,z)=j(g,h(z))j(h,z).
\end{equation*}
Hence, if we use the principal branch of the argument (i.e. $\text{arg}%
(z)\in (-\pi,\pi]$) to define $z^\nu$, we have
\begin{equation*}
j(gh,z)^\nu=\lambda(g,h,\nu)j(g,h(z))^\nu j(h,z)^\nu,
\end{equation*}
for a unimodular function $\lambda$. It then immediately follows that $%
 \tau_n^\alpha:PSL (2,\mathbb{R})\to \mathcal{U}(L^2({\mathbb{C}}%
^+,\mu))$ given by 
\begin{equation}  \label{eq:tau-n}
 {\tau}_n^\alpha(g)F(z):= \left(\frac{|cz+d|}{cz+d}%
\right)^{2n+\alpha+1}F(g(z)),
\end{equation}%
defines a projective unitary group representation of $PSL (2,\mathbb{R})$ on 
$L^{2}(\mathbb{C}^{+},\mu )$. Note that $\tau_n^\alpha$ reduces to the standard representation in the analytic case ($n=0$).

Following Jones \cite[Sect. 4]{Jones} (the finite area condition is not
included by all authors), we define a \emph{Fuchsian group} $\Gamma $ as a  {%
discrete subgroup of }$PSL (2,\mathbb{R})$ {\ for which there exists a
fundamental domain with finite hyperbolic area (finite co-volume)}.

A \emph{fundamental domain} $\Omega$ for $\Gamma $ is a closed subset of $%
\mathbb{C}^{+}$ such that the images of $\Omega$ under $\Gamma $ cover $%
\mathbb{C}^+$, $\Omega$ is the closure of its interior $\Omega^{0}$, and for
each two distinct $\gamma_1,\gamma_2\in\Gamma$ one has $\gamma_1 (\Omega)^0
\cap \gamma_2 (\Omega)^0=\emptyset$.

\subsection{Von Neumann Algebras}

\label{sec:vna}

In this section, we closely follow the brief exposition of the topic in \cite%
{Jones} and repeat the results that are necessary for our arguments. For a thorough
introduction to von Neumann algebras we refer for example to \cite{jon15}.

A \emph{von Neumann algebra} $M$ is a $\ast$-closed unital algebra of bounded
operators on a (complex) Hilbert space $\mathcal{H}$ which is closed under
the topology of pointwise convergence on $\mathcal{H}$. A vector $\psi\in 
\mathcal{H}$ is called \emph{cyclic} if $M\psi$ is dense in $\mathcal{H}$
and $\psi$ is called \emph{separating} if $T\mapsto T\psi,\ T\in M$, is
injective. 
If there exists a trace functional  $\text{tr}:M\rightarrow\C$, such that $\text{tr}(AB)=\text{tr}(BA)$, $A,B\in M$, and $\text{tr}(I)=1$, then $M$ is called \emph{finite} and  we define the space $L^2(M)$ as the completion of $M$
with respect to the pre-Hilbert space inner product $\langle A,B\rangle =%
\text{tr}(B^\ast A)$.

It is a standard result on von Neumann algebras that if $\mathcal{H}$ is any
Hilbert space on which $M$ acts, then there is an $M$-linear isometry%
\begin{equation*}
U:\mathcal{H\rightarrow }\bigoplus\limits_{n=1}^{\infty }L^{2}(M)\text{.}
\end{equation*}
On $\bigoplus\limits_{n=1}^{\infty }L^{2}(M)$, the commutant $M'$ of $M$ admits a  trace $\text{Tr}_{L^2}$ which is canonically normalized such that the trace of any projection onto one of the $L^2(M)$'s is equal to $1$.
With that convention, we define the \emph{von Neumann dimension} by%
\begin{equation*}
\dim _{M}\mathcal{H}=\text{Tr}_{L^{2} }(UU^{\ast })\text{.}
\end{equation*}
The following elementary results on the von Neumann dimension will be the
key component in our arguments.

\begin{theorem}
\label{thm-cyclic} There is a cyclic vector for $M$ if and only if $\text{dim%
}_{M}\mathcal{H}\leq 1$.
\end{theorem}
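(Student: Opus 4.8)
The plan is to transport the problem into the standard representation of $M$ via the given $M$-linear isometry $U\colon\H\to\mathcal K:=\bigoplus_{n=1}^{\infty}L^{2}(M)$ and to read off the von Neumann dimension as a trace of a projection. Put $p:=UU^{\ast}$; this is a projection in the commutant $\mathcal M$ of $M$ (acting diagonally) on $\mathcal K$, $U$ is an $M$-linear unitary from $\H$ onto $p\mathcal K$, and by definition $\dim_{M}\H=\text{Tr}_{L^{2}}(p)$. Let $e\in\mathcal M$ be the projection onto the first summand $L^{2}(M)$; by the chosen normalisation $\text{Tr}_{L^{2}}(e)=1$. The argument then reduces to comparing $p$ with $e$ inside $\mathcal M$, combined with the following standard fact about the finite von Neumann algebra $M$: the commutant of $M$ acting on one copy of $L^{2}(M)$ is $JMJ$, where $J\widehat a=\widehat{a^{\ast}}$, so that the projections dominated by $e$ in $\mathcal M$ are precisely the operators $JmJ$ with $m\in M$ a projection, and a one-line computation gives $JmJ\,L^{2}(M)=\overline{\{\widehat{am}:a\in M\}}=\overline{M\widehat m}$; in particular $\widehat m$ is a cyclic vector for $M$ on $JmJ\,L^{2}(M)$.

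For the implication ``cyclic $\Rightarrow\dim_{M}\H\le1$'', let $\xi\in\H$ be cyclic and consider the normal positive functional $\omega(x):=\ip{x\xi}{\xi}$ on $M$, which satisfies $\omega(1)=\norm{\xi}^{2}<\infty$. Since $L^{2}(M)$ is the standard form of the finite algebra $M$, there is a vector $\eta\in L^{2}(M)=e\mathcal K$ with $\omega(x)=\ip{x\eta}{\eta}_{L^{2}(M)}$ for all $x\in M$ (one may take $\eta=h^{1/2}$, where $h\in L^{1}(M)_{+}$ is the density of $\omega$ with respect to $\text{tr}$). Then $\norm{a\eta}_{L^{2}(M)}^{2}=\omega(a^{\ast}a)=\norm{a\xi}^{2}$ for every $a\in M$, so $a\xi\mapsto a\eta$ is well defined and isometric on the dense subspace $M\xi\subseteq\H$ and extends to an $M$-linear isometry $V\colon\H\to L^{2}(M)$; composing with $p$ and with the inclusion $L^{2}(M)=e\mathcal K\hookrightarrow\mathcal K$ turns $V$ into a partial isometry $w\in\mathcal M$ with $w^{\ast}w=p$ and $ww^{\ast}\le e$. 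Hence $\dim_{M}\H=\text{Tr}_{L^{2}}(p)=\text{Tr}_{L^{2}}(ww^{\ast})\le\text{Tr}_{L^{2}}(e)=1$.

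For the converse, assume $\text{Tr}_{L^{2}}(p)\le1=\text{Tr}_{L^{2}}(e)$. By the comparison theory of projections in $\mathcal M$, where $\text{Tr}_{L^{2}}$ serves as the dimension function, this forces $p\precsim e$, i.e.\ there is a partial isometry $w\in\mathcal M$ with $w^{\ast}w=p$ and $p_{0}:=ww^{\ast}\le e$. Then $w$ is an $M$-linear unitary from $\H\cong p\mathcal K$ onto $p_{0}\mathcal K\subseteq L^{2}(M)$, and by the fact recalled above we may write $p_{0}=JmJ$ for a projection $m\in M$ with $p_{0}L^{2}(M)=\overline{M\widehat m}$. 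Therefore $\widehat m$ is cyclic for $M$ on $p_{0}\mathcal K$, and transporting it back along $w^{\ast}$ and then $U^{\ast}$ yields a cyclic vector for $M$ on $\H$. I expect the converse to be the main obstacle: passing from the numerical inequality $\text{Tr}_{L^{2}}(p)\le\text{Tr}_{L^{2}}(e)$ to the subequivalence $p\precsim e$, and recognising the resulting submodule of $L^{2}(M)$ as $JmJ\,L^{2}(M)$ for a projection $m\in M$, are exactly the points where the structure theory of finite von Neumann algebras (comparison of projections and the standard form) is genuinely used.
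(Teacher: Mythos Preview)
The paper does not actually prove Theorem~\ref{thm-cyclic}; it is quoted without proof as a standard background fact from the theory of finite von Neumann algebras (the authors write that they ``repeat the results that are necessary for our arguments'' from \cite{Jones,jon15}). So there is no in-paper proof to compare your attempt against.

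Your argument is the standard one and is essentially correct. Two minor comments. First, in the forward direction you could streamline: once $V\colon\H\to L^{2}(M)$ is an $M$-linear isometry, the map $w:=V U^{\ast}$ (extended by $0$ on $(1-p)\mathcal K$) already lies in $\mathcal M$ and witnesses $p\precsim e$; there is no need to ``compose with $p$''. Second, and more substantively, the step ``$\text{Tr}_{L^{2}}(p)\le\text{Tr}_{L^{2}}(e)\Rightarrow p\precsim e$'' in the converse direction uses that a scalar trace governs comparison of projections in $\mathcal M$. This is automatic when $M$ (hence $\mathcal M$) is a factor, which is exactly the situation in the paper's applications (Fuchsian groups are ICC, so $vN_{\omega}(\Gamma)$ is a $\mathrm{II}_{1}$ factor). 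For a general finite $M$ one must phrase the comparison via the center-valued trace; your formulation glosses over this, so if you want the statement in the generality in which the paper writes it, you should either add the word ``factor'' to the hypotheses you are using, or replace the scalar trace inequality by the corresponding center-valued one at that step.
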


\begin{theorem}
\label{thm-separating} There is a separating vector for $M$ if and only if $%
\text{dim}_{M}\mathcal{H}\geq 1$.
\end{theorem}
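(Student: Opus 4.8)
The plan is to establish the equivalent statement that $\mathcal{H}$ admits a vector separating for $M$ if and only if $\mathcal{H}$ contains a copy of $L^{2}(M)$ as an $M$-submodule, and then to read off the claimed inequality from the projection $p:=UU^{\ast}$. This is dual to the proof of Theorem~\ref{thm-cyclic}, which characterises the existence of a cyclic vector by $\mathcal{H}$ being an $M$-submodule \emph{quotient} of $L^{2}(M)$, that is, by $p\preceq 1\otimes e_{11}$. Throughout I would use the elementary duality from the bicommutant theorem: $\psi\in\mathcal{H}$ is separating for $M$ if and only if it is cyclic for the commutant $M'$ of $M$ acting on $\mathcal{H}$; indeed the projection onto $\overline{M'\psi}$ lies in $(M')'=M$ and fixes $\psi$, so its complement is a projection of $M$ annihilating $\psi$, while if $m\psi=0$ for $m\in M$ then $m$ vanishes on the dense set $M'\psi$. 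For the applications in this paper $M$ is a factor, so comparison of projections is governed by the trace; for general finite $M$ one replaces scalars by the centre-valued trace throughout.

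For the implication $\dim_{M}\mathcal{H}\geq 1\Rightarrow$ existence of a separating vector: with $\mathcal{K}:=\bigoplus_{n\geq 1}L^{2}(M)$ and $e$ the projection of $\mathcal{K}$ onto its first summand (so $\mathrm{Tr}_{L^{2}}(e)=1$), we have $\mathcal{H}\cong p\mathcal{K}$ as $M$-modules, where $p=UU^{\ast}$ lies in the commutant $M'_{\mathcal{K}}$ of $M$ on $\mathcal{K}$ and $\mathrm{Tr}_{L^{2}}(p)=\dim_{M}\mathcal{H}\geq 1$. Since $\mathrm{Tr}_{L^{2}}$ is a faithful normal semifinite trace on the semifinite algebra $M'_{\mathcal{K}}$, comparison theory produces a subprojection $p_{0}\leq p$ Murray--von Neumann equivalent to $e$ in $M'_{\mathcal{K}}$; equivalently, $\mathcal{H}$ contains an $M$-submodule isomorphic as an $M$-module to $L^{2}(M)$. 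The image in $\mathcal{H}$ of the canonical vector $\widehat{1}\in L^{2}(M)$ is then separating for $M$ on $\mathcal{H}$: if $m\in M$ annihilates it, then --- as $M$ preserves this submodule and acts on it exactly as on $L^{2}(M)$ --- one gets $\mathrm{tr}(m^{\ast}m)=\|m\widehat{1}\|^{2}=0$, hence $m=0$ by faithfulness of the trace. (This also covers $\dim_{M}\mathcal{H}=\infty$: then $p$ is an infinite projection and $e\preceq p$ automatically.)

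For the converse, existence of a separating vector $\Rightarrow\dim_{M}\mathcal{H}\geq 1$: let $\psi$ with $\|\psi\|=1$ be separating for $M$; I would produce an $M$-linear isometry $L^{2}(M)\to\mathcal{H}$, which exhibits $L^{2}(M)$ as an $M$-submodule of $\mathcal{H}$ and hence forces $\dim_{M}\mathcal{H}\geq 1$ exactly as above. To this end, $\omega_{\psi}(m):=\langle m\psi,\psi\rangle$ is a faithful normal state of $M$, so $\omega_{\psi}(m)=\mathrm{tr}(mh)$ for a positive operator $h$ affiliated with $M$ of support $1$ --- the Radon--Nikodym derivative of $\omega_{\psi}$ with respect to $\mathrm{tr}$. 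Setting $b_{\varepsilon}:=h^{-1/2}\chi_{[\varepsilon,\infty)}(h)\in M$, the trace property and commutativity of $b_{\varepsilon}$ with $h$ yield $\|(mb_{\varepsilon})\psi\|^{2}=\omega_{\psi}(b_{\varepsilon}m^{\ast}mb_{\varepsilon})=\mathrm{tr}\big(m^{\ast}m\,\chi_{[\varepsilon,\infty)}(h)\big)\uparrow\mathrm{tr}(m^{\ast}m)$ as $\varepsilon\downarrow 0$. Therefore $\widehat{m}\mapsto\lim_{\varepsilon\to 0}(mb_{\varepsilon})\psi$ is well defined, $M$-linear and isometric on the dense subspace $\{\widehat{m}:m\in M\}$ of $L^{2}(M)$, so it extends to the required isometry $V\colon L^{2}(M)\to\mathcal{H}$; transporting $V$ through $U$ exhibits a subprojection of $p=UU^{\ast}$ equivalent to $e$, so $\dim_{M}\mathcal{H}=\mathrm{Tr}_{L^{2}}(p)\geq\mathrm{Tr}_{L^{2}}(e)=1$.

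The steps I expect to demand genuine care are (i) the comparison lemma for projections under the semifinite trace $\mathrm{Tr}_{L^{2}}$ on $M'_{\mathcal{K}}$, together with the centre-valued bookkeeping if $M$ is not a factor, and (ii) in the converse, the existence and basic properties of the Radon--Nikodym derivative $h$ of a faithful normal state of a finite von Neumann algebra together with the truncation estimate that makes $V$ well defined --- this is precisely where finiteness of $M$ enters. If one prefers to bypass (ii), it suffices to treat the case $\dim_{M}\mathcal{H}<1$ (the conclusion of the converse being automatic otherwise): there $p\preceq e$, so $\mathcal{H}\cong qL^{2}(M)$ for a projection $q$ in the commutant $JMJ$ of $M$ on $L^{2}(M)$ with $\mathrm{tr}(q)=\dim_{M}\mathcal{H}<1$; by the duality above a separating vector for $M$ would be a cyclic vector for the finite corner $N:=q(JMJ)q$, which Theorem~\ref{thm-cyclic} forbids because the rescaling identity $\dim_{N}(qL^{2}(M))=1/\mathrm{tr}(q)$ gives $\dim_{N}(qL^{2}(M))>1$. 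This trades (ii) for a rescaling lemma for the von Neumann dimension under passage to a corner.
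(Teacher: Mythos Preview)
The paper does not prove this theorem at all: it is stated in the background Section~\ref{sec:vna} as a standard fact from the theory of finite von Neumann algebras, with the explicit remark that the section ``closely follow[s] the brief exposition of the topic in \cite{Jones} and repeat[s] the results that are necessary for our arguments.'' So there is no proof in the paper to compare against.

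Your argument is a correct and standard proof of this classical result. The duality between separating and cyclic vectors via the bicommutant theorem, the comparison of the projections $p=UU^{\ast}$ and $e$ under the canonical trace on the commutant, and the Radon--Nikodym construction in the converse are all sound. Your alternative route for the converse (reducing to Theorem~\ref{thm-cyclic} via the corner $qL^{2}(M)$ and the rescaling identity) is also a well-known shortcut. Either version would serve as a complete proof of what the paper merely quotes.
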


\begin{definition} 
Let $\Gamma$ be a (countable) discrete group. The \emph{von Neumann algebra of} $\Gamma$, which we denote by $vN(\Gamma)$, is the von Neumann algebra on $\ell^2(\Gamma)$ generated by the left regular representation $\gamma\mapsto \lambda_\gamma$, where $\lambda_{\gamma}(f)(\gamma')=f(\gamma^{-1}\gamma')$.

More generally, if $\omega:\Gamma\times\Gamma\rightarrow\mathbb{T}$ is a unit circle valued 2-cocycle, $vN_\omega(\Gamma)$ is generated on $\ell^2(\Gamma)$ by the unitaries $\lambda_\gamma^\omega$ where $\lambda_{\gamma}^\omega f(\gamma')=\omega(\gamma,\gamma')f(\gamma^{-1}\gamma')$.
\end{definition}

\noindent The subsequent proposition allows us to explicitly calculate the
von Neumann dimension in the particular case that the von Neumann algebra is
generated by a group representation of a discrete, infinite conjugacy class
(ICC) group. The results can be found in \cite[Proposition~3.6 \& Corollary~3.7]{Jones}, but similar calculations have already appeared in the proof of \cite[Theorem~3.3.2]{coxeter}.

\begin{proposition}
\label{dim}Let $\Gamma $ be a discrete ICC group and $\gamma \mapsto \tau
(\gamma )$ a projective group representation on $\mathcal{H}$ with $2$%
-cocycle $\omega $. Suppose there is a projection $Q$ on $\mathcal{H}$ such
that 
\begin{equation*}
\tau (\gamma ^{-1})Q\tau (\gamma )\perp Q\text{, \ \ \ \ for }\gamma \in
\Gamma -\{e\}\text{, \ \ \ and \ \ \ }\sum_{\gamma \in \Gamma }\tau (\gamma
^{-1})Q\tau (\gamma )=I\text{.}
\end{equation*}%
Then the following statements hold:
\begin{enumerate}
\item[$(i)$] There is a $\Gamma$-linear unitary map $U:\H\rightarrow \ell^2(\Gamma)\otimes Q\H$  with $U\tau(\gamma)U^{-1}=\lambda_\gamma^\omega\otimes Id$, for $\gamma\in\Gamma$.
\item[$(ii)$] The action of $\Gamma$ on $\H$ makes it into a $vN_\omega(\Gamma)$-module.

\item[$(iii)$] If $P$ is another projection on $\mathcal{H}$ that commutes with $\tau
(\gamma )$ for every $\gamma \in \Gamma $, then%
\begin{equation*}
\dim _{vN_{\omega }(\Gamma )}P\mathcal{H}=\emph{trace}_{B(\mathcal{H})}(QPQ)%
\text{,}
\end{equation*}
where $\emph{trace}_{B(\mathcal{H})}$ denotes the usual trace.
\end{enumerate}
\end{proposition}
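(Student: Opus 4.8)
The plan is to follow the strategy of \cite[Section~3]{Jones}. The first step is to convert the hypotheses on $Q$ into an explicit orthogonal decomposition of $\mathcal{H}$ indexed by $\Gamma$. After the usual normalization of the $2$-cocycle (so that $\tau(e)=I$, $\tau(\gamma^{-1})=\tau(\gamma)^{\ast}$, and $\omega$ is trivial on pairs $(e,\cdot)$, $(\cdot,e)$, $(\gamma,\gamma^{-1})$), each operator $E_{\gamma}:=\tau(\gamma^{-1})Q\tau(\gamma)$ is an orthogonal projection. The relation $\sum_{\gamma}E_{\gamma}=I$ then forces the $E_{\gamma}$ to be pairwise orthogonal: compressing $I=\sum_{\gamma}E_{\gamma}$ by $E_{h}$ gives $\sum_{\gamma\neq h}E_{h}E_{\gamma}E_{h}=0$, a sum of positive operators, so each $E_{h}E_{\gamma}E_{h}=(E_{\gamma}E_{h})^{\ast}(E_{\gamma}E_{h})$ vanishes. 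The hypothesis $E_{\gamma}\perp Q$ for $\gamma\neq e$ is in turn equivalent to $Q\tau(\gamma)Q=0$, i.e.\ $Q\tau(\gamma)$ annihilates $Q\mathcal{H}$. Consequently every $\xi\in\mathcal{H}$ satisfies $\xi=\sum_{\gamma}\tau(\gamma^{-1})\bigl(Q\tau(\gamma)\xi\bigr)$ with $Q\tau(\gamma)\xi\in Q\mathcal{H}$ and $\|\xi\|^{2}=\sum_{\gamma}\|Q\tau(\gamma)\xi\|^{2}$.

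For part $(i)$, I would define $U\colon\mathcal{H}\to\ell^{2}(\Gamma)\otimes Q\mathcal{H}$ by $U\xi=\sum_{\gamma\in\Gamma}\delta_{\gamma}\otimes Q\tau(\gamma)\xi$, where $\{\delta_{\gamma}\}$ is the canonical basis of $\ell^{2}(\Gamma)$; the norm identity above shows $U$ is a well-defined isometry. The intertwining relation is then checked by a direct computation: expanding $U\tau(g)\xi=\sum_{\gamma}\delta_{\gamma}\otimes\omega(\gamma,g)\,Q\tau(\gamma g)\xi$, reindexing $\gamma\mapsto\gamma g^{-1}$, and matching the emerging phase against $\lambda_{g}^{\omega}\otimes\mathrm{Id}$ by means of the cocycle identity, yields $U\tau(g)U^{-1}=\lambda_{g}^{\omega}\otimes\mathrm{Id}$. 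Surjectivity of $U$ then follows because $U$ maps $Q\mathcal{H}$ onto $\delta_{e}\otimes Q\mathcal{H}$ (using $Q\tau(\gamma)Q=0$ for $\gamma\neq e$), and the intertwining relation spreads this onto $\delta_{\gamma}\otimes Q\mathcal{H}$ for every $\gamma$. This bookkeeping with the phases of $\omega$ in the intertwining step is the one genuinely delicate point, and I expect it to be the main technical obstacle; everything else is soft. Part $(ii)$ is then immediate: pulling the $\omega$-twisted left regular representation back along $U$ exhibits the $\Gamma$-action on $\mathcal{H}$ as a $vN_{\omega}(\Gamma)$-action, and $vN_{\omega}(\Gamma)$ is a finite von Neumann algebra --- in fact a $\mathrm{II}_{1}$ factor since $\Gamma$ is ICC, cf.\ \cite[Prop.~3.6]{Jones} and \cite[Thm.~3.3.2]{coxeter} --- with trace $\mathrm{tr}(x)=\langle x\delta_{e},\delta_{e}\rangle$ and $L^{2}(vN_{\omega}(\Gamma))\cong\ell^{2}(\Gamma)$.

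For part $(iii)$ I would unwind the definition of $\dim_{M}$ from Section~\ref{sec:vna}. Fixing an orthonormal basis $\{e_{i}\}$ of $Q\mathcal{H}$ identifies $\ell^{2}(\Gamma)\otimes Q\mathcal{H}$ with $\bigoplus_{i}L^{2}(vN_{\omega}(\Gamma))$, so $U$ is the required $M$-linear isometry, and the isometry for the submodule $P\mathcal{H}$ is $U|_{P\mathcal{H}}$, whose range projection is $\widetilde P:=UPU^{-1}$. Since $P$ commutes with every $\tau(\gamma)$, the operator $\widetilde P$ commutes with each $\lambda_{\gamma}^{\omega}\otimes\mathrm{Id}$ and hence lies in $M'\otimes B(Q\mathcal{H})$, on which the canonically normalized trace $\mathrm{Tr}_{L^{2}}$ is the tensor product of $x\mapsto\langle x\delta_{e},\delta_{e}\rangle$ on $M'$ with the usual trace on $B(Q\mathcal{H})$ --- the normalization being correct because $1\otimes(\text{rank-one projection})$ is a projection onto one copy of $L^{2}(M)$ and has trace $1$. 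Therefore
\[
\dim_{vN_{\omega}(\Gamma)}P\mathcal{H}=\mathrm{Tr}_{L^{2}}(\widetilde P)=\sum_{i}\bigl\langle P\,U^{-1}(\delta_{e}\otimes e_{i}),\,U^{-1}(\delta_{e}\otimes e_{i})\bigr\rangle.
\]
Finally, the decomposition from the first step gives $U^{-1}(\delta_{e}\otimes e_{i})=e_{i}$ (the $e$-component of $U\eta$ is $Q\eta$, and for $\eta\in Q\mathcal{H}$ all other components vanish), so the right-hand side equals $\sum_{i}\langle Pe_{i},e_{i}\rangle=\sum_{i}\langle QPQe_{i},e_{i}\rangle=\mathrm{trace}_{B(\mathcal{H})}(QPQ)$, where the last equality uses that $QPQ$ is supported on $Q\mathcal{H}$ with orthonormal basis $\{e_{i}\}$. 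This yields $(iii)$ and completes the plan.
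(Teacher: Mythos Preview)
The paper does not prove this proposition; it simply imports it from \cite[Proposition~3.6 \& Corollary~3.7]{Jones} (with a remark that the formula in $(iii)$ occurs in the last line of the proof there) and points to \cite[Theorem~3.3.2]{coxeter} for a precursor. Your proposal is precisely a fleshed-out version of Jones' argument --- the decomposition $\mathcal{H}=\bigoplus_{\gamma}E_{\gamma}\mathcal{H}$, the explicit unitary $U\xi=\sum_{\gamma}\delta_{\gamma}\otimes Q\tau(\gamma)\xi$, and the computation of $\mathrm{Tr}_{L^{2}}(UPU^{-1})$ via the diagonal $\delta_{e}$-slot --- so you are reproducing exactly the source the paper cites, and the sketch is correct.

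One minor observation: your first paragraph derives pairwise orthogonality of the $E_{\gamma}$ from $\sum_{\gamma}E_{\gamma}=I$ alone, which in fact renders the separate hypothesis $\tau(\gamma^{-1})Q\tau(\gamma)\perp Q$ redundant; this is harmless for the proof but worth noting.
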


\noindent Note that $(iii)$ is not explicitly formulated in this form
in \cite[Corollary~3.7]{Jones} but it appears in the last line of the proof.

\section{Proof of the main results}

\label{sec:proof}

\subsection{Proof of Theorem~\protect\ref{thm:3}}

First, we observe that the wavelet space $\mathcal{W}_{\psi _{n}^{\alpha }}$
is invariant under the action of ${\tau }_{n}^{\alpha }$ , the
projective unitary group representation of $PSL (2,\mathbb{R})$ on $L^{2}(%
\mathbb{C}^{+},\mu )$ defined in Section \ref{sec:fuchs}.

\begin{lemma}\label{lem:invariance}
If $F\in \mathcal{W}_{\psi_n^\alpha}$, then $  \tau_n^\alpha(g)F\in 
\mathcal{W}_{\psi_n^\alpha}$ for every $g\in PSL(2,\mathbb{R})$.
\end{lemma}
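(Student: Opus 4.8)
The plan is to show that the wavelet space $\mathcal{W}_{\psi_n^\alpha}$, being a reproducing kernel subspace of $L^2(\mathbb{C}^+,\mu)$, is carried into itself by $\tau_n^\alpha(g)$ by checking the effect of $\tau_n^\alpha(g)$ on the reproducing kernel $k_{\psi_n^\alpha}(z,w)$. Since $\mathcal{W}_{\psi_n^\alpha}=W_{\psi_n^\alpha}(\mathcal{H}^2(\mathbb{C}^+))$ is spanned (densely) by the kernel functions $k_{\psi_n^\alpha}(z,\cdot)$, it suffices to verify that $\tau_n^\alpha(g)$ maps each $k_{\psi_n^\alpha}(w,\cdot)$ to a scalar multiple of another kernel function $k_{\psi_n^\alpha}(g^{-1}(w),\cdot)$, together with a harmless modulus-one factor coming from the cocycle. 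Concretely, I would take the explicit formula \eqref{eq:repr-kernel} for $k_{\psi_n^\alpha}$ and compute $\big(\frac{|cz+d|}{cz+d}\big)^{2n+\alpha+1}k_{\psi_n^\alpha}(g(z),w)$, then match it, up to a unimodular constant, with an expression of the form $\overline{(\text{phase})}\, k_{\psi_n^\alpha}(z,g^{-1}(w))$.

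The key computational identities are the transformation rules for the Poincar\'e metric and for the M\"obius action. First, $\mathrm{Im}(g(z))=\frac{\mathrm{Im}(z)}{|cz+d|^2}$, so that $\sqrt{s_{g(z)}s_w}$ picks up a factor $|cz+d|^{-1}$. Second, $g(z)-\overline{w}=\frac{az+b-\overline{w}(cz+d)}{cz+d}$; writing $\overline{w}=g(\zeta)$ for the appropriate $\zeta$ (or more directly computing $g(z)-g(\zeta)=\frac{z-\zeta}{(cz+d)(c\zeta+d)}$) one finds that the combinations $\frac{\overline{g(z)}-w}{g(z)-\overline{w}}$, $\frac{\sqrt{s_{g(z)}s_w}}{-i(g(z)-\overline{w})}$, and the argument $1-\frac{8 s_{g(z)} s_w}{|g(z)-\overline{w}|^2}$ of the Jacobi polynomial each transform in a clean way: the Jacobi-polynomial argument is invariant because $\frac{s_z s_w}{|z-\overline{w}|^2}$ is the standard $PSL(2,\mathbb{R})$-invariant quantity (it equals $\cosh^{-2}$ of half the hyperbolic distance, up to normalization), the ratio $\frac{\overline{z}-w}{z-\overline{w}}$ transforms by a ratio of cocycle factors, and the power $\big(\frac{\sqrt{sv}}{-i(z-\overline{w})}\big)^{\alpha+1}$ produces exactly the factor $(cz+d)^{\alpha+1}$-type term that cancels the phase $\big(\frac{|cz+d|}{cz+d}\big)^{2n+\alpha+1}$ up to modulus one, after also absorbing the $n$-th power of the cocycle ratio coming from $\big(\frac{\overline{z}-w}{z-\overline{w}}\big)^n$. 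Branch-of-the-argument bookkeeping is needed here, but since $\tau_n^\alpha$ was already shown in Section~\ref{sec:fuchs} to be a well-defined \emph{projective} unitary representation via the unimodular cocycle $\lambda$, all branch ambiguities are absorbed into unimodular constants and do not affect membership in $\mathcal{W}_{\psi_n^\alpha}$.

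An alternative, slightly slicker route avoids the closed form of the kernel: intertwine $\tau_n^\alpha$ through the wavelet transform. One knows $k_{\psi_n^\alpha}(z,w)=C_{\psi_n^\alpha}^{-1}\langle\pi(w)\psi_n^\alpha,\pi(z)\psi_n^\alpha\rangle$, and the point is that the unitary $\pi$ of the $ax+b$-group embeds in the $PSL(2,\mathbb{R})$-action; the extra phase $\big(\frac{|cz+d|}{cz+d}\big)^{2n+\alpha+1}$ is precisely engineered so that $\tau_n^\alpha(g)$ corresponds, on the Hardy side, to a multiplier operator that preserves the one-dimensional span of $\psi_n^\alpha$ up to a unimodular scalar (this is really the content of the fact that $\psi_n^\alpha$ generates a discrete-series-type subrepresentation). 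Then $\tau_n^\alpha(g)$ permutes the kernel functions up to scalars and the invariance of $\mathcal{W}_{\psi_n^\alpha}$ is immediate. Either way, I expect the main obstacle to be not conceptual but the careful tracking of the $(cz+d)^{\alpha+1}$ versus $|cz+d|^{\alpha+1}$ powers and the branch cuts: one must confirm that the leftover factor after cancellation is genuinely of modulus one and independent of $z$, which is exactly where the cocycle identity $j(gh,z)=j(g,h(z))j(h,z)$ and the unimodularity of $\lambda$ from Section~\ref{sec:fuchs} are used.
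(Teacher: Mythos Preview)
Your approach is essentially the same as the paper's: both establish the kernel transformation identity
\[
\left(\frac{|cz+d|}{cz+d}\right)^{2n+\alpha+1}k_{\psi_n^\alpha}(g(z),w)=\left(\frac{-cw+a}{|-cw+a|}\right)^{2n+\alpha+1}k_{\psi_n^\alpha}(z,g^{-1}(w))
\]
via exactly the three transformation rules you list (invariance of $\tfrac{sv}{|z-\overline{w}|^2}$, and the phase transformations of $\tfrac{\overline{z}-w}{z-\overline{w}}$ and $\tfrac{\sqrt{sv}}{-i(z-\overline{w})}$), and then conclude invariance of $\mathcal{W}_{\psi_n^\alpha}$. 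The only cosmetic difference is that the paper plugs the identity directly into the reproducing formula for $\tau_n^\alpha(g)F$ and performs a change of variables $w\mapsto g(w)$, whereas you phrase the conclusion as ``$\tau_n^\alpha(g)$ sends each kernel function $k_{\psi_n^\alpha}(\cdot,w)$ to a unimodular multiple of $k_{\psi_n^\alpha}(\cdot,g^{-1}(w))$'' and extend by density and unitarity---these are equivalent.
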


\proof Let $F\in \mathcal{W}_{\psi_n^\alpha}$. As $\mathcal{W}_{\psi _{n}^{\alpha }}$ is a reproducing kernel
Hilbert space, it follows that $ {\tau }_{n}^{\alpha }(\gamma )F\in 
\mathcal{W}_{\psi _{n}^{\alpha }}$ if and only if for every $z\in {\mathbb{C}%
}^{+}$ 
\begin{equation*}
 {\tau }_{n}^{\alpha }(g)F(z)=\int_{C^{+}} \tau _{n}^{\alpha
}(g)F(w)k_{\psi _{n}^{\alpha }}(z,w)d\mu (w).
\end{equation*}%
Let us now establish an identity for $k_{\psi _{n}^{\alpha }}$ that allows
us to verify the previous condition. Note that 
\begin{equation*}
\text{Im}(g(z))=\frac{\text{Im}(z)}{|cz+d|^{2}},\quad \text{and}\quad g^{-1}=%
\begin{pmatrix}
d & -b \\ 
-c & a%
\end{pmatrix}%
.
\end{equation*}%
It is then straightforward to show the following relations 
\begin{align}
\frac{\text{Im}(g(z))\text{Im}(w)}{|g(z)-\overline{w}|^{2}}& =\frac{\text{Im}%
(z)\text{Im}\left( g^{-1}(w)\right) }{\left\vert z-\overline{g^{-1}(w)}%
\right\vert ^{2}},  \label{eq:identities-1} \\
\frac{|cz+d|}{cz+d}\ \frac{\sqrt{\text{Im}(g(z))\text{Im}(w)}}{-i\big(g(z)-%
\overline{w}\big)}& =\frac{-cw+a}{|-cw+a|}\ \frac{\sqrt{\text{Im}(z)\text{Im}%
(g^{-1}(w))}}{-i\big(z-\overline{g^{-1}(w)}\big)}, \\
\left( \frac{|cz+d|}{cz+d}\right) ^{2}\ \frac{\overline{g(z)}-w}{g(z)-%
\overline{w}}& =\left( \frac{-cw+a}{|-cw+a|}\right) ^{2}\ \frac{\overline{z}%
-g^{-1}(w)}{z-\overline{g^{-1}(w)}}.  \label{eq:identities-3}
\end{align}%
Plugging equations \eqref{eq:identities-1}-\eqref{eq:identities-3} into the
explicit expression for $k_{\psi _{n}^{\alpha }}$ from \eqref{eq:repr-kernel}
yields 
\begin{equation}
\left( \frac{|cz+d|}{cz+d}\right) ^{2n+\alpha +1}k_{\psi _{n}^{\alpha
}}(g(z),w)=\left( \frac{-c{w}+a}{|-c{w}+a|}\right) ^{2n+\alpha +1}k_{\psi
_{n}^{\alpha }}(z,g^{-1}(w)).  \label{eq:identity-k-psi}
\end{equation}%
Since we assumed that $F\in \mathcal{W}_{\psi _{n}^{\alpha }}$, this shows 
\begin{align}
\begin{split}
 \tau _{n}^{\alpha }(g)F(z)& =\left( \frac{|cz+d|}{cz+d}\right)
^{2n+\alpha +1}F(g(z)) \\
& =\left( \frac{|cz+d|}{cz+d}\right) ^{2n+\alpha +1}\int_{\mathbb{C}%
^{+}}F(w)k_{\psi _{n}^{\alpha }}(g(z),w)d\mu (w) \\
& =\int_{\mathbb{C}^{+}}F(w)\left( \frac{-cw+a}{|-cw+a|}\right) ^{2n+\alpha
+1}k_{\psi _{n}^{\alpha }}(z,g^{-1}(w))d\mu (w) \\
& =\int_{\mathbb{C}^{+}}F(g(w))\left( \frac{-cg(w)+a}{|-cg(w)+a|}\right)
^{2n+\alpha +1}k_{\psi _{n}^{\alpha }}(z,w)d\mu (w) \\
& =\int_{\mathbb{C}^{+}}F(g(w))\left( \frac{|c w+d|}{cw+d}\right)
^{2n+\alpha +1}k_{\psi _{n}^{\alpha }}(z,w)d\mu (w) \\
& =\int_{\mathbb{C}^{+}} \tau _{n}^{\alpha }(g)F(w)k_{\psi
_{n}^{\alpha }}(z,w)d\mu (w),  \label{eq:p-commutes}
\end{split}%
\end{align}%
where we used the cocycle relation $j(g^{-1},g(w))=j(g,w)^{-1}$  
%and the observation that $\lambda (g,g^{-1},\nu )=1$
to derive the second to last identity. \hfill $\Box $\newline

We now prove the second statement of Theorem~\ref{thm:3}, which
characterizes the functions invariant under the kind of projective unitary
group representation of $PSL(2,\mathbb{R})$ on $L^{2}(\mathbb{C}^{+},\mu )$
we are using.

\begin{lemma}
Let $\mathcal{A}$ be given by \eqref{def:A} and $\sigma _{s}$ be a projective
unitary group representation of $PSL (2,\mathbb{R})$ on $L^{2}(\mathbb{C}%
^{+},\mu )$ of the form 
\begin{equation*}
\sigma _{s}(g)F(z)=\left( \frac{|cz+d|}{cz+d}\right) ^{s}F(g(z)).
\end{equation*}%
If $\psi \in \mathcal{A}$ and $\sigma _{s}$ leaves $\mathcal{W}_{\psi } $ invariant, then for some $n\in \mathbb{N}_{0}$
and $C\in \mathbb{C}\backslash \{0\}$ we have 
\begin{equation*}
\psi =C\psi _{n}^{\alpha },
\end{equation*}%
where $\alpha =s-2n-1$.
\end{lemma}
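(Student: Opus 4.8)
The plan is to translate the invariance hypothesis into a functional equation on the Fourier side and then use the structure of the affine group to pin down $\widehat{\psi}$ as a Laguerre function. Since $\sigma_s$ restricted to the affine subgroup $G_a\subset PSL(2,\mathbb{R})$ is just the ordinary wavelet representation $\pi$ (the factor $|cz+d|/(cz+d)$ is trivial when $c=0$), the space $\mathcal{W}_\psi$ is automatically $\pi$-invariant, so the real content lies in invariance under a one-parameter subgroup transverse to $G_a$, for instance the rotations fixing $i$, i.e. the group element $g_\theta=\left(\begin{smallmatrix}\cos\theta & -\sin\theta\\ \sin\theta & \cos\theta\end{smallmatrix}\right)$. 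First I would write the reproducing kernel $k_\psi(z,w)=C_\psi^{-1}\langle\pi(w)\psi,\pi(z)\psi\rangle$ and, exactly as in Lemma~\ref{lem:invariance}, observe that $\sigma_s$ leaves $\mathcal{W}_\psi$ invariant if and only if the transformation identity
\begin{equation*}
\left(\frac{|cz+d|}{cz+d}\right)^s k_\psi(g(z),w)=\left(\frac{-cw+a}{|-cw+a|}\right)^s k_\psi(z,g^{-1}(w))
\end{equation*}
holds for all $g\in PSL(2,\mathbb{R})$ — equivalently, that $P_\psi$ commutes with $\sigma_s(g)$. Evaluating this at the diagonal $z=w=i$ for $g=g_\theta$ forces $k_\psi(i,i)$ to be finite (fine) and, more usefully, evaluating the off-diagonal identity and differentiating in $\theta$ at $\theta=0$ turns the invariance into an infinitesimal condition: the generator of the rotation subgroup must map $\mathcal{W}_\psi$ into itself.

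Next I would push this infinitesimal condition to the Fourier transform. Using \eqref{Kg2}, the kernel is an explicit integral in $\widehat{\psi}$, and the rotation generator (the Maass-type operator appearing in the paper, or rather its conjugate acting on wavelet coefficients) becomes, after conjugating by $\mathcal{F}$ and using $z=x+is$, a second-order ordinary differential operator in the Fourier variable $\xi>0$ — this is precisely why the class $\mathcal{A}$ in \eqref{def:A} is defined by conditions on $\widehat{f}$, $\widehat{f}\widehat{f}\,'$ and $\xi\widehat{f}\widehat{f}\,''$: these are exactly the integrability requirements that make the formal manipulations (integration by parts, differentiation under the integral) rigorous. Carrying this out, the requirement that $\widehat{\psi}$ generate an invariant subspace forces $\widehat{\psi}$ to satisfy a Laguerre-type differential equation: writing $\widehat{\psi}(\xi)=\xi^{\alpha/2}e^{-\xi}u(2\xi)$ with $\alpha=s-2n-1$ determined by the weight, the equation for $u$ is the generalized Laguerre ODE $t u''+(\alpha+1-t)u'+n u=0$, whose only solution in the relevant weighted $L^2$ space is (a constant multiple of) $L_n^\alpha$. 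The integer $n\in\mathbb{N}_0$ emerges as the spectral parameter forced by $L^2$-integrability at $0$ and $\infty$; the reality condition $\widehat{\psi}\in\mathbb{R}$ removes the remaining phase ambiguity so that $C\in\mathbb{C}\setminus\{0\}$ is the only freedom left.

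The main obstacle I anticipate is making the step from "$\mathcal{W}_\psi$ invariant under the rotation subgroup" to "$\widehat{\psi}$ solves the Laguerre ODE" fully rigorous rather than merely formal. One must (a) justify that invariance of the closed subspace $\mathcal{W}_\psi$ under the unitary one-parameter group $\{\sigma_s(g_\theta)\}$ implies that the (unbounded, skew-adjoint) generator maps a dense subspace of $\mathcal{W}_\psi$ into $\mathcal{W}_\psi$, which is a Stone-theorem argument, and (b) identify which vector this generator, applied to $\psi$, must be proportional to — here one needs that the $\pi$-cyclic vector $\psi$ together with the invariance forces $\psi$ itself to be an eigenvector of (the wavelet-side conjugate of) the generator, not merely of some power. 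This uses that $\pi$ acts irreducibly on $\mathcal{H}^2(\mathbb{C}^+)$, so the only $\pi$-invariant closed subspace is the whole space; combined with $\sigma_s$-invariance of $\mathcal{W}_\psi$ and the fact that $PSL(2,\mathbb{R})$ is generated by $G_a$ and the rotations, one deduces that $\mathcal{W}_\psi$ carries a genuine $\sigma_s$-representation, and the Casimir/generator eigenvalue equation then is forced. Once the differential equation is in hand, identifying its $L^2$-solutions with $L_n^\alpha$ is classical (it is the content of the paragraph preceding \eqref{eq:formula-norm}), and the hypotheses packaged in $\mathcal{A}$ are exactly what licenses the boundary terms to vanish so that no spurious solutions survive.
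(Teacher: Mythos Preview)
Your overall strategy---reduce to invariance under the rotation subgroup $g_\theta$ fixing $i$, then characterize $\widehat\psi$---is the same as the paper's, and your derivation of the kernel identity
\[
\left(\frac{|cz+d|}{cz+d}\right)^s k_\psi(g(z),w)=\left(\frac{-cw+a}{|-cw+a|}\right)^s k_\psi(z,g^{-1}(w))
\]
is exactly what the paper does first. But from there the routes diverge, and your detour through infinitesimal generators is unnecessary and leaves the gap you yourself flag. The paper simply \emph{evaluates the kernel identity at the fixed point} $w=i$: since $g_\theta^{-1}(i)=i$ and $-ci+a=e^{-i\theta}$, the identity collapses to
\[
\left(\frac{|z\sin\theta+\cos\theta|}{z\sin\theta+\cos\theta}\right)^s W_\psi\psi(g_\theta(z))=e^{i\theta s}W_\psi\psi(z),
\]
which already says that $k_\psi(\cdot,i)\propto W_\psi\psi$ is an \emph{exact} eigenvector of each $\sigma_s(g_\theta)$---no Stone theorem, no irreducibility, no passage to the generator. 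This is the step you are missing: the eigenvector property is not something to be wrestled out of subspace invariance via representation theory, it drops out for free because $i$ is a fixed point of $SO(2)$.

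Once that functional equation is in hand, the paper does not derive or solve the Laguerre ODE either. After the cosmetic renormalization $W_\psi^s\psi(z):=\mathrm{Im}(z)^{-s/2}W_\psi\psi(z)$, the equation says $W_\psi^s\psi$ is $SO(2)$-invariant in the sense of Jiang \cite{Rotation}, and the paper simply invokes \cite[Theorem~2.1]{Rotation}, which characterizes all $\psi\in\mathcal{A}$ with rotation-invariant ambiguity function as $\widehat\psi(\xi)=C\,\xi^{(s-1-2n)/2}e^{-\xi}L_n^{s-1-2n}(2\xi)$ for some $n\in\mathbb{N}_0$. The conditions defining $\mathcal{A}$ are the hypotheses of Jiang's theorem, not integrability conditions for an integration-by-parts you would perform yourself. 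Your plan to derive the Laguerre ODE by hand is in effect a proposal to reprove Jiang's result; that is a legitimate and more self-contained route, but it is substantially longer than what the paper does, and your sketch does not yet close the eigenvector gap that the fixed-point evaluation dissolves in one line.
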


\proof If $\sigma _{s}$ leaves $\mathcal{W}_{\psi }$ invariant, then 
\begin{eqnarray*}
\sigma _{s}(g) F(z) &=&\int_{\mathbb{C}^{+}}\left( \frac{|cw+d|}{cw+d}\right)
^{s}F (g(w))k_{\psi }(z,w)d\mu (w) \\
&=&\int_{\mathbb{C}^{+}}F(w)\left( \frac{-cw+a}{|-cw+a|}\right) ^{s}k_{\psi
}(z,g^{-1}(w))d\mu (w).
\end{eqnarray*}%
On the other hand, one has
\begin{equation*}
\sigma _{s}(g) F(z)=\int_{\mathbb{C}^{+}}F(w)\left( \frac{|cz+d|}{cz+d}\right)
^{s}k_{\psi }(g(z),w)d\mu (w)\text{.}
\end{equation*}%
Combining the two preceding identities, it follows that%
\begin{equation}  \label{invariance}
\left( \frac{|cz+d|}{cz+d}\right) ^{s}k_{\psi }(g(z),w)=\left( \frac{-cw+a}{%
|-cw+a|}\right) ^{s}k_{\psi }(z,g^{-1}(w)),
\end{equation}%
for every $g\in PSL (2,\mathbb{R})$ and every $z,w\in \mathbb{C}^{+}$.
Consequently, setting $g=g_{\theta }=%
\begin{pmatrix}
\cos \theta & -\sin \theta \\ 
\sin \theta & \cos \theta%
\end{pmatrix}%
$, $\theta \in \lbrack 0,2\pi )$, and $w=i$ yields the necessary condition 
\begin{equation*}
\left( \frac{|z\sin \theta +\cos \theta |}{z\sin \theta +\cos \theta }%
\right) ^{s}W_{\psi }\psi (g_{\theta }(z))=e^{i\theta s}W_{\psi }\psi (z).
\end{equation*}%
Now, multiply both sides by $e^{i\theta s}\text{Im}(z)^{-s/2}$ to show
\begin{equation*}
\left( \frac{e^{i\theta }}{z\sin \theta +\cos \theta }\right) ^{s}W_{\psi
}^{s}\psi (g_{\theta }(z))=W_{\psi }^{s}\psi (z),
\end{equation*}%
where $W_{\psi }^{s}f(z):=\text{Im}(z)^{-s/2}W_{\psi }f(z)$. Thus, $W_{\psi
}^{s}f(z)$ is $SO(2)$-invariant in the sense of \cite{Rotation}. From \cite[Theorem~2.1]%
{Rotation} it then follows that $\psi $ is given in the Fourier domain by 
\begin{equation*}
\widehat{\psi }(\xi )=C\xi ^{\frac{s-1-2n}{2}}e^{-\xi }L_{n}^{s-1-2n}(2\xi ),\quad \xi
>0,
\end{equation*}%
for some $C\in \mathbb{C}\backslash \{0\}$, and $n\in \mathbb{N}_{0}$ with $%
n<\frac{s-1}{2}$. If we set $\alpha  =s-1-2n$ and solve for $s$, then we get 
$\psi =C\psi _{n}^{\alpha }$ together with $\sigma_s=  {\tau}%
_{n}^{\alpha }$. \hfill$\Box$ 

\subsection{Some preparatory results}
Let now $\Gamma $ be a Fuchsian group with fundamental domain $\Omega\subset 
\mathbb{C}^{+}$, and $\psi _{n}^{\alpha }$ as defined in Section~\ref%
{sec:laguerre}. Note that every Fuchsian group is ICC \cite{ake81}.

With a slight abuse of notation we   now consider $\tau_n^\alpha :\Gamma \rightarrow \mathcal{U}(\mathcal{W}%
_{\psi_n^\alpha })$ to be the restriction of $ {\tau}_n^\alpha$ to $%
\Gamma $ and $\mathcal{W}_{\psi_n^\alpha }$ which is a projective
representation of $\Gamma$.

In the next Lemma, we verify the conditions of Proposition~\ref{dim}.

\begin{lemma}
\label{lem:proj-props} Let $\Gamma \subset PSL (2,\mathbb{R})$ be a Fuchsian
group with fundamental domain $\Omega \subset \mathbb{C}^{+}$, $P_{\mathcal{W%
}_{\psi _{n}^{\alpha }}}:L^{2}(\mathbb{C}^{+},\mu )\rightarrow \mathcal{W}%
_{\psi _{n}^{\alpha }}$ be the projection onto $\mathcal{W}_{\psi
_{n}^{\alpha }}$, and 
\begin{equation*}
Q_{\Omega }:L^{2}(\mathbb{C}^{+},\mu )\rightarrow L^{2}(\mathbb{C}^{+},\mu
),\quad Q_{\Omega }f(z):=\chi _{\Omega }(z)f(z),
\end{equation*}%
where $\chi_\Omega$ denotes the characteristic function of $\Omega$.
Then $\tau _{n}^{\alpha }(\gamma )Q_{\Omega }\tau _{n}^{\alpha }(\gamma
^{-1})\bot Q_{\Omega }$, for every $\gamma \in \Gamma -\{e\}$, 
\begin{equation*}
\sum_{\gamma \in \Gamma }\tau _{n}^{\alpha }(\gamma )Q_{\Omega }\tau
_{n}^{\alpha }(\gamma ^{-1})=I_{L^{2}(\mathbb{C}^{+},\mu )}.
\end{equation*}%
and $P_{\mathcal{W}_{\psi _{n}^{\alpha }}}$ commutes with $\tau _{n}^{\alpha
}(\gamma )$ for every $\gamma \in \Gamma $.
\end{lemma}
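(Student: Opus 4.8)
\textbf{Proof plan for Lemma~\ref{lem:proj-props}.}
The plan is to verify the three assertions separately, since the first two are geometric statements about the action of $\Gamma$ on $L^2(\mathbb{C}^+,\mu)$ via the tiling property of the fundamental domain, while the third is a restatement of what was already essentially proved in Lemma~\ref{lem:invariance}. First I would treat $Q_\Omega$: note that $\tau_n^\alpha(\gamma)Q_\Omega\tau_n^\alpha(\gamma^{-1})$ is again multiplication by a characteristic function, namely $\chi_{\gamma(\Omega)}$. Indeed, for $F\in L^2(\mathbb{C}^+,\mu)$ one has $\tau_n^\alpha(\gamma^{-1})F(z)=u_\gamma(z)F(\gamma^{-1}(z))$ for a unimodular factor $u_\gamma$, so $Q_\Omega\tau_n^\alpha(\gamma^{-1})F(z)=\chi_\Omega(z)u_\gamma(z)F(\gamma^{-1}(z))$, and applying $\tau_n^\alpha(\gamma)$ (which multiplies by $u_\gamma^{-1}\circ$ composition-adjusted phase and substitutes $z\mapsto \gamma(z)$) returns $\chi_\Omega(\gamma(z))F(z)=\chi_{\gamma^{-1}(\Omega)}(z)F(z)$; the unimodular phase factors cancel exactly because $\tau_n^\alpha$ is a (projective) unitary representation, so $\tau_n^\alpha(\gamma)\tau_n^\alpha(\gamma^{-1})$ is a scalar of modulus $1$ times the identity, and conjugating $Q_\Omega$ (a self-adjoint projection) by a unitary preserves the property of being multiplication by a $\{0,1\}$-valued function. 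The cleanest bookkeeping is to observe directly that $\tau_n^\alpha(\gamma)Q_\Omega\tau_n^\alpha(\gamma)^{-1}=Q_{\gamma(\Omega)}$, reindexing by $\gamma^{-1}$ if needed.

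Granting $\tau_n^\alpha(\gamma)Q_\Omega\tau_n^\alpha(\gamma^{-1})=Q_{\gamma(\Omega)}$, the first assertion becomes $Q_{\gamma(\Omega)}\perp Q_\Omega$ for $\gamma\ne e$, which holds because $Q_AQ_B$ is multiplication by $\chi_{A\cap B}$, and by the definition of a fundamental domain $\gamma_1(\Omega)^0\cap\gamma_2(\Omega)^0=\emptyset$ for distinct $\gamma_1,\gamma_2$; taking $\gamma_1=\gamma$, $\gamma_2=e$ shows $\gamma(\Omega)$ and $\Omega$ overlap only on a $\mu$-null set (their boundaries), hence $Q_{\gamma(\Omega)}Q_\Omega=0$ as operators on $L^2$. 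The second assertion, $\sum_{\gamma\in\Gamma}Q_{\gamma(\Omega)}=I$, is exactly the statement that $\{\gamma(\Omega)\}_{\gamma\in\Gamma}$ covers $\mathbb{C}^+$ with pairwise $\mu$-null overlaps, so that $\sum_{\gamma}\chi_{\gamma(\Omega)}(z)=1$ for $\mu$-a.e.\ $z$; the sum of the corresponding multiplication operators then converges strongly to the identity, which is the form in which Proposition~\ref{dim} uses it. Here I would be a little careful to state the convergence in the strong operator topology (not norm), since that is all that is needed and all that is true.

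For the third assertion, that $P_{\mathcal{W}_{\psi_n^\alpha}}$ commutes with $\tau_n^\alpha(\gamma)$ for every $\gamma\in\Gamma$: this is a consequence of Lemma~\ref{lem:invariance}. That lemma shows $\tau_n^\alpha(g)$ maps $\mathcal{W}_{\psi_n^\alpha}$ into itself for every $g\in PSL(2,\mathbb{R})$; applying it to $g$ and to $g^{-1}$ shows $\tau_n^\alpha(g)$ maps $\mathcal{W}_{\psi_n^\alpha}$ \emph{onto} itself, i.e.\ $\mathcal{W}_{\psi_n^\alpha}$ is an invariant subspace for the unitary $\tau_n^\alpha(g)$. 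A unitary commutes with the orthogonal projection onto any of its invariant (equivalently, reducing) subspaces, so $\tau_n^\alpha(\gamma)P_{\mathcal{W}_{\psi_n^\alpha}}=P_{\mathcal{W}_{\psi_n^\alpha}}\tau_n^\alpha(\gamma)$ for every $\gamma\in\Gamma$. (Concretely, invariance of both $\mathcal{W}_{\psi_n^\alpha}$ and its orthogonal complement under the unitary $\tau_n^\alpha(\gamma)$ — the latter following from invariance of the former under both $\tau_n^\alpha(\gamma)$ and $\tau_n^\alpha(\gamma^{-1})$ — gives the commutation on each summand, hence on all of $L^2(\mathbb{C}^+,\mu)$.)

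I expect the only genuinely delicate point to be the explicit verification that conjugating $Q_\Omega$ by $\tau_n^\alpha(\gamma)$ yields exactly $Q_{\gamma(\Omega)}$ with no leftover phase — i.e.\ that the unimodular cocycle ambiguity $\lambda(g,h,\nu)$ entering the projective representation cancels in the conjugation. This is really automatic: $Q_\Omega$ is a multiplication operator, conjugation by any unitary of the form $(\text{unimodular multiplier})\circ(\text{measure-preserving substitution})$ sends a multiplication operator to another multiplication operator by the pulled-back symbol, and the unimodular multiplier plays no role because $|{\cdot}|=1$ and it is cancelled in the sandwich $\tau_n^\alpha(\gamma)(\cdot)\tau_n^\alpha(\gamma)^{-1}$. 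Everything else is formal: the tiling property of $\Omega$ and the already-established invariance of $\mathcal{W}_{\psi_n^\alpha}$. \hfill$\Box$
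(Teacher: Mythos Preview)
Your proof is correct and, for the first two assertions, essentially identical to the paper's: both compute $\tau_n^\alpha(\gamma)Q_\Omega\tau_n^\alpha(\gamma^{-1})=Q_{\gamma^{-1}(\Omega)}$ directly (your explicit calculation lands on $\chi_{\gamma^{-1}(\Omega)}$, matching the paper, though you then write $Q_{\gamma(\Omega)}$ in the summary line; as you say, this is a harmless reindexing). For the third assertion the paper takes a slightly different route: it reruns the integral-kernel computation \eqref{eq:p-commutes} from Lemma~\ref{lem:invariance} with $F$ replaced by $P_{\mathcal{W}_{\psi_n^\alpha}}G$ for arbitrary $G\in L^2(\mathbb{C}^+,\mu)$, thereby verifying $\tau_n^\alpha(\gamma)P_{\mathcal{W}_{\psi_n^\alpha}}=P_{\mathcal{W}_{\psi_n^\alpha}}\tau_n^\alpha(\gamma)$ explicitly via the kernel identity \eqref{eq:identity-k-psi}. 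Your argument is the cleaner abstract one: Lemma~\ref{lem:invariance} applied to both $\gamma$ and $\gamma^{-1}$ shows $\mathcal{W}_{\psi_n^\alpha}$ is a reducing subspace for the unitary $\tau_n^\alpha(\gamma)$, hence the projection commutes. Both are valid; yours avoids repeating the kernel calculation, while the paper's makes the commutation concrete at the level of integral kernels.
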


\proof First, we have that $\tau _{n}^{\alpha }(\gamma )Q_{\Omega }\tau
_{n}^{\alpha }(\gamma ^{-1})=Q_{\gamma ^{-1}(\Omega )}\bot Q_{\Omega }$, for 
$\gamma \neq e$, since 
\begin{align*}
\tau _{n}^{\alpha }(\gamma )Q_{\Omega }\tau _{n}^{\alpha }(\gamma
^{-1})F(z)& =\tau _{n}^{\alpha }(\gamma )\left( \chi _{\Omega }(z)\cdot
\left( \frac{|-cz+a|}{-cz+a}\right) ^{2n+\alpha +1}F(\gamma ^{-1}(z))\right)
\\
& =\chi _{\Omega }(\gamma (z))\cdot F(z) \\
& =\chi _{\gamma ^{-1}(\Omega )}(z)F(z)=Q_{\gamma ^{-1}(\Omega )}F(z),
\end{align*}%
and $\Omega ^{0}\cap \gamma ^{-1}(\Omega )^{0}=\emptyset $. The second
property then immediately follows as
\begin{equation*}
\sum_{\gamma \in \Gamma }\tau _{n}^{\alpha }(\gamma )Q_{\Omega }\tau
_{n}^{\alpha }(\gamma ^{-1})=\sum_{\gamma \in \Gamma }Q_{\gamma ^{-1}(\Omega
)}=\sum_{\gamma \in \Gamma }\chi _{\gamma ^{-1}(\Omega )}=\chi _{\mathbb{C}%
^{+}}=I_{L^{2}(\mathbb{C}^{+},\mu )}.
\end{equation*}%
Repeating the arguments that we used to derive \eqref{eq:p-commutes} for $%
F=P_{\mathcal{W}_{\psi }}G$, $G\in L^2({\mathbb{C}}^+,\mu),$ then shows that 
$P_{\mathcal{W}_{\psi_n^\alpha }}\tau _{n}^{\alpha }(\gamma )=\tau
_{n}^{\alpha }(\gamma )P_{\mathcal{W}_{\psi_n^\alpha }}$. \hfill $\Box $%
\newline

\noindent Combining this with Proposition~\ref{dim}, we obtain%
\begin{equation*}
\text{dim}_{vN_{\omega }(\Gamma )}\mathcal{W}_{\psi _{n}^{\alpha }}=\text{dim%
}_{vN_{\omega }(\Gamma )}P_{\mathcal{W}_{\psi _{n}^{\alpha }}}L^2({\mathbb{C}%
}^+,\mu)=\text{trace}_{B(L^{2}({\mathbb{C}}^{+},\mu ))}(Q_{\Omega }P_{%
\mathcal{W}_{\psi _{n}^{\alpha }}}Q_{\Omega }),
\end{equation*}
which enables us to compute the von Neumann dimension of the wavelet spaces in terms of
the trace of the trace class integral operator $Q_{\Omega }P_{%
\mathcal{W}_{\psi _{n}^{\alpha }}}Q_{\Omega }$.

\begin{proposition}
\label{prop:dim} Let $\Gamma ,\Omega,Q_{\Omega}$ and $P_{\mathcal{W}%
_{\psi_n^\alpha }}$ as in Lemma~\ref{lem:proj-props}. Then 
\begin{equation*}
\emph{trace}_{B(L^2({\mathbb{C}}^+,\mu))}(Q_{\Omega}P_{\mathcal{W}%
_{\psi_n^\alpha }}Q_{\Omega})=|\Omega|\frac{\Vert \psi_n^\alpha \Vert
_{2}^{2}}{C_{\psi_n^\alpha }}.
\end{equation*}
\end{proposition}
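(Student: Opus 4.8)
The plan is to recognize $Q_\Omega P_{\mathcal{W}_{\psi_n^\alpha}}Q_\Omega$ as a positive trace-class integral operator and to compute its trace by a Hilbert--Schmidt identity rather than by a direct diagonal integration. Since $P:=P_{\mathcal{W}_{\psi_n^\alpha}}$ and $Q_\Omega$ are both orthogonal projections on $L^2(\mathbb{C}^+,\mu)$, we have $P=P^*=P^2$ and $Q_\Omega=Q_\Omega^*$, so that
\begin{equation*}
Q_\Omega P_{\mathcal{W}_{\psi_n^\alpha}}Q_\Omega=Q_\Omega P^*PQ_\Omega=(PQ_\Omega)^*(PQ_\Omega).
\end{equation*}
Hence it will be enough to show that $PQ_\Omega$ is Hilbert--Schmidt and to evaluate $\|PQ_\Omega\|_{HS}^2$, because $\text{trace}(T^*T)=\|T\|_{HS}^2$ for every bounded operator $T$, and a bounded operator of the form $T^*T$ with $T$ Hilbert--Schmidt is trace class.

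For the computation I would first write down the integral kernel of $PQ_\Omega$. By the discussion in Section~\ref{sec:wavelets}, $P$ is the integral operator on $L^2(\mathbb{C}^+,\mu)$ with kernel $k_{\psi_n^\alpha}(z,w)$, so $PQ_\Omega$ has kernel $K(z,w):=k_{\psi_n^\alpha}(z,w)\chi_\Omega(w)$. Therefore, using Tonelli's theorem (legitimate since the integrand is nonnegative),
\begin{equation*}
\|PQ_\Omega\|_{HS}^2=\int_{\mathbb{C}^+}\int_{\mathbb{C}^+}|K(z,w)|^2\,d\mu(z)\,d\mu(w)=\int_\Omega\Bigl(\int_{\mathbb{C}^+}|k_{\psi_n^\alpha}(z,w)|^2\,d\mu(z)\Bigr)d\mu(w).
\end{equation*}
The inner integral is handled by the reproducing property: $k_{\psi_n^\alpha}(\cdot,w)\in\mathcal{W}_{\psi_n^\alpha}$ (it is a constant multiple of $W_{\psi_n^\alpha}(\pi(w)\psi_n^\alpha)$), so applying the reproducing formula to $F=k_{\psi_n^\alpha}(\cdot,w)$ and evaluating at $z=w$ gives $\int_{\mathbb{C}^+}|k_{\psi_n^\alpha}(z,w)|^2\,d\mu(z)=k_{\psi_n^\alpha}(w,w)=\|\psi_n^\alpha\|_2^2/C_{\psi_n^\alpha}$, the constant value of the kernel on the diagonal recorded in Section~\ref{sec:wavelets}. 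Consequently
\begin{equation*}
\text{trace}_{B(L^2(\mathbb{C}^+,\mu))}(Q_\Omega P_{\mathcal{W}_{\psi_n^\alpha}}Q_\Omega)=\|PQ_\Omega\|_{HS}^2=\int_\Omega\frac{\|\psi_n^\alpha\|_2^2}{C_{\psi_n^\alpha}}\,d\mu(w)=|\Omega|\,\frac{\|\psi_n^\alpha\|_2^2}{C_{\psi_n^\alpha}},
\end{equation*}
which is finite precisely because $\Omega$ has finite $\mu$-measure, so all the operators involved are indeed of the asserted trace class.

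There is no genuine obstacle here; the only points needing a sentence of care are that $P$ acts as an integral operator against $k_{\psi_n^\alpha}$ on all of $L^2(\mathbb{C}^+,\mu)$ (this is exactly the assertion quoted in Section~\ref{sec:wavelets}) and the application of Tonelli, which is harmless since every integrand is nonnegative. As an alternative route one could instead invoke a Mercer-type argument: $Q_\Omega P Q_\Omega$ is a positive trace-class operator with jointly continuous kernel $\chi_\Omega(z)k_{\psi_n^\alpha}(z,w)\chi_\Omega(w)$, hence its trace equals $\int_\Omega k_{\psi_n^\alpha}(z,z)\,d\mu(z)=|\Omega|\,\|\psi_n^\alpha\|_2^2/C_{\psi_n^\alpha}$; I would nevertheless present the Hilbert--Schmidt computation as the main argument, since it avoids any appeal to continuity of the reproducing kernel and to the precise form of Mercer's theorem on a non-compact domain.
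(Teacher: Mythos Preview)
Your argument is correct and follows essentially the same strategy as the paper: both compute the Hilbert--Schmidt norm of $PQ_\Omega$ (equivalently $Q_\Omega P$) to establish that $Q_\Omega P Q_\Omega$ is trace class. The one organizational difference is that the paper uses the Hilbert--Schmidt computation only to verify trace class, and then separately evaluates the trace by integrating the kernel along the diagonal, $\int_\Omega k_{\psi_n^\alpha}(z,z)\,d\mu(z)$; you instead read off the trace directly from the identity $\mathrm{trace}(T^*T)=\|T\|_{HS}^2$, which is slightly cleaner since it bypasses any appeal to a Mercer-type diagonal formula. Your ``alternative route'' at the end is in fact exactly the paper's route.
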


\begin{proof}
First, observe that $\widetilde{P}:=Q_{\Omega}P_{\mathcal{W}_{\psi_n^\alpha
}}Q_{\Omega}$ is an integral operator with integral kernel 
\begin{equation*}
K_n^\alpha(z,w):=\chi _{\Omega}(z)k_{\psi_n^\alpha }(z,w)\chi_\Omega (w).
\end{equation*}%
If we show that $\widetilde{P}$ is trace class, then the result follows from 
\begin{equation*}
\text{trace}_{B(\mathcal{H})}(\widetilde{P})=\int_{\mathbb{C}%
^{+}}K_n^\alpha(z,z)d\mu (z)=\int_{\Omega}k_{\psi_n^\alpha }(z,z)d\mu
(z)=|\Omega|\frac{\Vert \psi_n^\alpha \Vert _{2}^{2}}{C_{\psi_n^\alpha }}.
\end{equation*}%
Note that the operator $Q_{\Omega}P_{\mathcal{W}_{\psi_n^\alpha }}$ is
Hilbert-Schmidt since 
\begin{align*}
\Vert Q_{\Omega}P_{\mathcal{W}_{\psi_n^\alpha }}\Vert _{{\mathcal{HS}}}^{2}&
=\int_{\mathbb{C}^{+}}\int_{\mathbb{C}^{+}}|\chi
_{\Omega}(z)k_{\psi_n^\alpha }(z,w)|^{2}d\mu (w)d\mu (z) \\
& =\int_{\Omega}\int_{\mathbb{C}^{+}}\frac{1}{C_{\psi_n^\alpha }}|\langle
\pi (z)\psi_n^\alpha ,\pi (w)\psi_n^\alpha \rangle |^{2}d\mu (w)d\mu (z) \\
& =\int_{\Omega}\Vert \pi (z)\psi_n^\alpha \Vert _{2}^{2}d\mu
(z)=|\Omega|\Vert \psi_n^\alpha \Vert _{2}^{2}<\infty .
\end{align*}%
It thus follows, that $\widetilde{P}$ is trace class as the product of two
Hilbert-Schmidt operators.
\end{proof}

\noindent With everything in place, the proofs of our main results take only
a few lines. 

\subsection{ Proof of Theorem~\ref{thm:1}}
If $\{\tau_n^\alpha (\gamma)F\}_{\gamma \in \Gamma }$ is a frame for $\mathcal{W}_{\psi_n^\alpha}$, then, as shown in \cite[Section 5]{KL}, this requires $%
\Omega $ to be compact. Moreover, as $\{\tau_n^\alpha (\gamma)F\}_{\gamma \in \Gamma }$ is in particular complete, we have that $vN_{\omega }(\Gamma )F$ is
dense in $\mathcal{W}_{\psi _{n}^{\alpha }}$. In other words, $F$ is a cyclic vector for $vN_{\omega
}(\Gamma )$. Theorem~\ref{thm-cyclic} thus shows that 
\begin{equation*}
\text{dim}_{vN_{\omega }(\Gamma )}\mathcal{W}_{\psi _{n}^{\alpha }}\leq 1.
\end{equation*}%
On the other hand, Proposition~\ref{prop:dim} yields 
\begin{equation*}
\text{dim}_{vN_{\omega }(\Gamma )}\mathcal{W}_{\psi _{n}^{\alpha }}=\frac{%
\Vert \psi _{n}^{\alpha }\Vert _{2}^{2}}{C_{\psi _{n}^{\alpha }}}|\Omega |%
\text{.}
\end{equation*}%
We conclude that%
\begin{equation*}
\left\vert \Omega \right\vert \leq \frac{C_{\psi _{n}^{\alpha }}}{\Vert \psi
_{n}^{\alpha }\Vert _{2}^{2}}.
\end{equation*}%
If $\{\tau
_{n}^{\alpha }(\gamma)F\}_{\gamma \in \Gamma }$ is a   Riesz sequence for $%
\mathcal{W}_{\psi_n^\alpha}$, then by \cite[Proposition~5.2]{RV} (the
statement assumes Kleppner's condition, which is implied by the ICC
condition), $F$ is separating for 
$vN_{\omega }(\Gamma )$. Theorem~\ref{thm-separating} now shows that 
\begin{equation*}
\text{dim}_{vN_{\omega }(\Gamma )}\mathcal{W}_{\psi _{n}^{\alpha }}\geq 1.
\end{equation*}%
We then conclude as before. \hfill$\Box$

\subsection{ Proof of Corollary~\ref{cor:other-family}}
By the orthogonality relation \eqref{eq:orth-rel} we have that $\{\varphi_\gamma\}_{\gamma\in\Gamma}$ is a frame (resp. Riesz sequence) for $\H^2(\C^+)$ if and only if $\{W_{\psi_n^\alpha}\varphi_\gamma\}_{\gamma\in\Gamma}$ is a frame (resp. Riesz sequence) for $\mathcal{W}_{\psi_n^\alpha}$. Using the definition of $\varphi_\lambda$ in \eqref{eq:def-varphi} to calculate $W_{\psi_n^\alpha}\varphi_\gamma$ we find
\begin{align*}
W_{\psi_n^\alpha}\varphi_\gamma(z)&=\int_{\C^+} \Phi(w) \left(\frac{cw+d}{|cw+d|}\right)^{2n+\alpha+1} \langle \pi(\gamma(w))\psi_n^\alpha,\pi(z)\psi_n^\alpha\rangle d\mu(w)
\\
&=\left(\frac{|-cz+a|}{-cz+a}\right)^{2n+\alpha+1}\int_{\C^+} \Phi(w)  \langle \pi(w)\psi_n^\alpha,\pi(\gamma^{-1}(z))\psi_n^\alpha\rangle d\mu(w)
\\
&=\left(\frac{|-cz+a|}{-cz+a}\right)^{2n+\alpha+1}P_{\mathcal{W}_{\psi_n^\alpha}}\Phi(\gamma^{-1}(z))
\\
&=\tau_n^\alpha(\gamma^{-1})P_{\mathcal{W}_{\psi_n^\alpha}}\Phi(z),
\end{align*}
by \eqref{eq:identity-k-psi}. Hence, $P_{\mathcal{W}_{\psi_n^\alpha}}\Phi$ is a cyclic (resp. separating) vector for $\mathcal{W}_{\psi_n^\alpha}$ and the result follows from Theorem~\ref{thm:1}. \hfill$\Box$

\subsection{ Proof of Corollary~\ref{cor:wavelet}} 
If $\{\pi (\gamma (z))\psi
_{n}^{\alpha }\}_{\gamma \in \Gamma }$ is a wavelet frame (resp. Riesz sequence) for $\mathcal{H}%
^{2}(\mathbb{C}^{+})$, then $\left\{ \overline{k_{\psi _{n}^{\alpha
}}(\gamma (z),\cdot )}\right\} _{\gamma \in \Gamma }$ generates a frame (resp. Riesz sequence) for $%
\mathcal{W}_{\psi _{n}^{\alpha }}$.   Moreover, since $\Gamma $ is a subgroup of $PSL(2,%
\mathbb{R})$, it follows from (\ref{eq:identity-k-psi}) that 
\begin{equation*}
\left\{ \tau _{n}^{\alpha }(\gamma )\overline{k_{\psi _{n}^{\alpha
}}(z,\cdot )}\right\} _{\gamma \in \Gamma }=\left\{ \left( \frac{cz+d}{|cz+d|%
}\right) ^{2n+\alpha +1}\overline{k_{\psi _{n}^{\alpha }}(\gamma (z),\cdot )}%
\right\} _{\gamma \in \Gamma }.
\end{equation*}%
As the factor in front of $\overline{k_{\psi _{n}^{\alpha }}(\gamma (z),w )}$ is unimodular, and independent of $w$, it follows that 
%Thus, $vN_{\omega }(\Gamma )\overline{k_{\psi _{n}^{\alpha }}(z,\cdot )}$ is
%dense in $\mathcal{W}_{\psi _{n}^{\alpha }}$. In other words, 
$\overline{
k_{\psi _{n}^{\alpha }}(z,\cdot )}$ is a cyclic (resp. separating) vector for $vN_{\omega
}(\Gamma )$ and the first and third statements follow from Theorem~\ref{thm:1}.
% Theorem~\ref{thm-cyclic} thus shows that 
%\begin{equation*}
%\text{dim}_{vN_{\omega }(\Gamma )}\mathcal{W}_{\psi _{n}^{\alpha }}\leq 1.
%\end{equation*}%
%On the other hand, Proposition~\ref{prop:dim} yields 
%\begin{equation*}
%\text{dim}_{vN_{\omega }(\Gamma )}\mathcal{W}_{\psi _{n}^{\alpha }}=\frac{%
%\Vert \psi _{n}^{\alpha }\Vert _{2}^{2}}{C_{\psi _{n}^{\alpha }}}|\Omega |%
%\text{.}
%\end{equation*}%
%We conclude that%
%\begin{equation*}
%\left\vert \Omega \right\vert \leq \frac{C_{\psi _{n}^{\alpha }}}{\Vert \psi
%_{n}^{\alpha }\Vert _{2}^{2}}.
%\end{equation*}%
For the second statement, assume that $\left\vert \Omega \right\vert >{%
C_{\psi _{n}^{\alpha }}}/{\Vert \psi _{n}^{\alpha }\Vert _{2}^{2}}$, then
there exist no cyclic vector in $\mathcal{W}_{\psi _{n}^{\alpha }}$. In
particular, there exist $F\in W_{\psi _{n}^{\alpha }}$ that is orthogonal to 
$vN_{\omega }(\Gamma )\overline{k_{\psi _{n}^{\alpha }}(z,\cdot )}$, i.e.
for every $\gamma \in \Gamma $ 
\begin{align*}
0&= \langle F,\tau _{n}^{\alpha }(\gamma )\overline{k_{\psi _{n}^{\alpha
}}(z,\cdot )}\rangle   = \langle \tau _{n}^{\alpha }(\gamma ^{-1})F,\overline{k_{\psi _{n}^{\alpha
}}(z,\cdot )}\rangle= \tau_n^\alpha(\gamma^{-1})F(z),
\end{align*}%
which in turn implies $F(\gamma(z))=0$ for every $\gamma\in\Gamma$.
\hfill $\Box $

\subsection{Proof of Theorem~\ref{thm:super-wavelet}}
For the choice $\alpha=2(B-n)-1$ we have that 
the exponent of the phase factor in \eqref{eq:tau-n} is given by $2B$ (i.e., it is independent of $n=0,1,...,\lfloor B-\frac{1}{2}\rfloor$). By Lemma~\ref{lem:invariance}, each space $A_n^B(\C^+,\mu)$ is invariant under the projective representation $\sigma_{2B}$. Consequently,  the orthogonal sum   $\boldsymbol{A}_N^B(\C^+,\mu)=\bigoplus_{n=0}^{N-1} A_n^B(\C^+,\mu)$   is also invariant under $\sigma_{2B}$. One can then proceed in a similar fashion as in the proofs of Theorem~\ref{thm:1}. In particular, the reproducing kernel of $\boldsymbol{A}_N^B(\C^+,\mu)$ is given by $\boldsymbol{k}_N^B=\sum_{k=0}^{N-1} k_{\psi_n^{2(B-n)-1}}$ with diagonal
\begin{align*}
\boldsymbol{k}_{N}^B(z,z)&=\sum_{n=0}^{N-1}  {k}_{ {\psi}^{2(B-n)-1}_n}(z,z)  =\sum_{n=0}^{N-1} \frac{2(B-n)-1}{2} =\frac{N(2B-N)}{2},\quad  z\in\C^+.
\end{align*}
It remains to note that (as in Lemma~\ref{lem:proj-props}) the projection $P_{\boldsymbol{A}_N^B(\C^+,\mu)}$ commutes with $\sigma_{2B}(\gamma)$, for every $\gamma\in\Gamma$, which implies (using the same steps as in the proof of Proposition~\ref{prop:dim}) that
$$
\text{dim}_{vN_{\omega }(\Gamma )}\boldsymbol{A}_N^B(\C^+,\mu) =|\Omega|\boldsymbol{k}^B_N(i,i)=|\Omega|\frac{N(2B-N)}{2}.
$$
\pbox

\end{document}